\newcommand*{\CopyCounter}[2]{%
  \expandafter\def\csname c@#2\endcsname{\csname c@#1\endcsname}%
  \expandafter\def\csname p@#2\endcsname{\csname p@#1\endcsname}%
  \expandafter\def\csname the#2\endcsname{\csname the#1\endcsname}}
\theoremstyle{plain}
\newtheorem*{observ}[Observation]{Observation}
\newcommand{\domain}{\Omega}
\newcommand{\boundary}{\partial \domain}
\newcommand{\xbar}{\bm{\bar{x}}}
\newcommand{\Ctilde}{\tilde{C}}
\newcommand{\x}{\bm{x}}
\newcommand{\ba}{\bm{a}}
\newcommand{\adot}{\ba(\cdot)}
\newcommand{\F}{\mathcal{F}}
\newcommand{\y}{\bm{y}}
\newcommand{\J}{{\cal{J}}}
\newcommand{\X}{{\cal{X}}}
\newcommand{\R}{\mathbb{R}}
\newcommand{\E}{\mathbb{E}}
\newcommand{\ubar}{\bar{u}}
\newcommand{\A}{\mathcal{A}}
\newcommand{\D}{\mathcal{D}}
\newcommand{\Ubar}{\overline{U}}
\newcommand{\PA}{P^a}
\newcommand{\PG}{P^g}
\newcommand{\Pbar}{\bar{P}}
\newcommand{\PbarA}{\Pbar^a}
\newcommand{\PbarG}{\Pbar^g}
\newcommand{\Ap}{A_p}
\newcommand{\ApA}{\Ap^a}
\newcommand{\ApG}{\Ap^g}
\newcommand{\Vp}{V_p}
\newcommand{\VpA}{\Vp^a}
\newcommand{\VpG}{\Vp^g}
\newcommand{\pDomain}{\domain_p}
\newcommand{\pDomainA}{\pDomain^a}
\newcommand{\pDomainG}{\pDomain^g}
\newcommand{\pBoundary}{\boundary_p}
\newcommand{\pBoundaryA}{\pBoundary^a}
\newcommand{\pBoundaryG}{\pBoundary^g}
\newcommand{\Btilde}{b}
\newcommand{\Bmin}{\hat{B}}
\newcommand{\Bmax}{\check{B}}
\DeclareMathOperator*{\argmin}{arg\,min}
\colorlet{lightgray}{gray!40}
\newcolumntype{"}{@{\hskip\tabcolsep\vrule width 1pt\hskip\tabcolsep}}
\setlist[description]{font=\normalfont\space}
\crefname{hypothesis}{Hypothesis}{Hypotheses}
\title{Control-Theoretic Models of Environmental Crime\thanks{Submitted to the editors June 24th, 2019.
\funding{This work is supported in part by the National
Science Foundation grant DMS-1738010.  The second author's work is also supported by the Simons Foundation Fellowship.}}}
\author{Elliot Cartee\thanks{Department of Mathematics, Cornell University, Ithaca, NY 14853
  (\email{evc34@cornell.edu}).}
\and Alexander Vladimirsky\thanks{Department of Mathematics and Center for Applied Mathematics, Cornell University, Ithaca, NY 14853
  (\email{vladimirsky@cornell.edu}).}
}
\begin{document}
\maketitle
\begin{abstract}
We present two models of perpetrators' decision-making in extracting resources from a protected area.  It is assumed that the authorities conduct surveillance to counter the extraction activities, and that perpetrators choose their post-extraction paths to balance the time/hardship of travel against the expected losses from a possible detection.
In our first model, the authorities are assumed to use ground patrols and the protected resources are confiscated as soon as the extractor is observed with them.
The perpetrators' path-planning is modeled using the optimal control of randomly-terminated process. 
In our second model, the authorities use aerial patrols, with the apprehension of perpetrators and confiscation of resources delayed until their exit from the protected area.
In this case the path-planning is based on multi-objective dynamic programming.
Our efficient numerical methods are illustrated on several examples with complicated geometry and terrain of protected areas, non-uniform distribution of protected resources, and spatially non-uniform detection rates due to aerial or ground patrols.
\end{abstract}

\begin{keywords}
Optimal Control, Hamilton-Jacobi, Multiobjective Path-Planning, 
Randomly-Terminated Processes, Environmental Crime.
\end{keywords}

\begin{AMS}
49N90, 90C29, 35F21, 65N06, 91B76
\end{AMS}

\section{Introduction}

Mathematical modeling of environmental crimes in national parks 
has been a growing area of research.  Both the authorities and non-governmental conservation organizations 
strive to prevent
{\em illegal extraction activities}, including  
wildlife poaching and illegal logging. 
In 2010, Albers \cite{albers2010spatial} introduced a framework where the government chooses a patrol density (determining the probability of detection at each location) in such a way as to maximize the pristine area of the park.
The perpetrators then choose where to extract resources in such a way as to maximize their expected payoff. 
The problem of choosing patrol densities to maximize the pristine area has also been explored  in \cite{johnson2012patrol}.
In recent years,
patrol strategies based on graph-theoretic models (such as PAWS \cite{fang2017paws} and INTERCEPT \cite{kar2017cloudy}) have even been deployed in actual national parks.

The original Albers model \cite{albers2010spatial} was posed with many restrictive/unrealistic assumptions, including the radial symmetry of the protected area, patrol strategies, and resource distribution.
In a recent paper by D.J. Arnold et al \cite{arnold2018modeling}, these restrictions were largely removed, with numerical simulations on arbitrary two-dimensional domains and the terrain directly affecting the speed of perpetrators.   The key mathematical tool employed in \cite{arnold2018modeling} is the level-set method, developed to track the propagation of interfaces by solving time-dependent Hamilton-Jacobi PDEs \cite{OsherSethian_LSM}.
We begin by briefly reviewing both of these models in Section \ref{s:PriorWork}.

In this paper, we offer a significantly different perspective on how to generalize the Albers model to arbitrary domains.
As explained in Section \ref{s:ProblemDescription}, our main focus is on a careful treatment of the path-planning problem faced by the extractors.
Capturing their rational behavior yields a more accurate prediction of affected areas and allows for a better planning of optimal patrol strategies.
We show that the perpetrators optimization problem changes significantly based on the consequences of being spotted while transporting the protected resources.

In our first ``Model G'', where authorities use ``ground patrols'' to spot the extractors, any detection leads to an immediate confiscation, with perpetrators subsequently switching to the fastest path to leave the protected area.  The probability of being detected along any originally chosen path is not a sufficient description here, since it does not reflect the expected duration of that post-confiscation trip.  We handle this challenge by finding the optimal controls for a suitable {\em randomly-terminated process}.  In our second ``Model A'', where authorities run ``aerial patrols,'' it is assumed that the perpetrators remain oblivious of when a detection happens and are only apprehended later, once they reach the boundary of the protected area.  As a result, they always use their pre-selected trajectories, which are chosen to balance the probability of detection against the time or (cumulative difficulty) of the post-extraction trip.
In standard path-planning applications, the goal is usually to minimize the integral of some ``running cost'' along a chosen trajectory.
The key difficulty of this second model is that no such running cost can be explicitly defined to recover trajectories maximizing the extractor's expected profit.
Nevertheless, in section \ref{s:MultiObjective} we show that this problem can be solved using the tools of {\em multi-objective} optimal control theory.  

We discuss efficient numerical methods for 
the associated stationary Hamilton-Jacobi-Bellman PDEs in Section \ref{s:Numerics}. 
Our algorithms are illustrated  on a number of examples in Section \ref{s:Examples}, including on realistic terrain maps from Yosemite National Park in California and Kangaroo
Island in South Australia. 
We show that Models A and G often yield very different perpetrator trajectories and also 
differ in their predictions of the subdomain unaffected by extraction activities. 
We conclude by discussing possible extensions and directions for future work in section \ref{s:Conclusions}.

\section{Prior Work}
\label{s:PriorWork}

The model proposed by Albers in \cite{albers2010spatial} is based on a {\em one-period Stackelberg game} between the protected area managers (PAMs) and the extractors targeting a protected domain $\domain \subset \R^2.$  PAMs decide on their patrol strategy, defining the pointwise rate of extractor-detection $\psi:\domain \to \R_{+,0}$ and respecting the known budget constraints for patrolling efforts:
\begin{equation}
\label{eq:patrol_budget}
\int_{\Omega}\psi^{\gamma}(\x)d\x \le E,
\end{equation}  
with a constant $\gamma \geq 1$ modeling a possible increase in marginal costs of maintaining a higher detection rate.  

Once $\psi$ is chosen, it becomes known to extractors, who select which parts of $\domain$ to target, trying to maximize their expected profit (defined below).  Correspondingly, PAMs' goal is to select $\psi$ that will minimize the impact of the extractors' best/rational response.  This can be interpreted as either maximizing the ``pristine'' area unaffected by the extractors or maximizing the amount of non-extracted resources.  In Albers' original model both of these interpretations are equivalent since she assumes a homogeneous  distribution of protected resources.

Even more restrictively, the domain $\domain$ is assumed to be a unit disk, with extractors evenly distributed along its boundary.  This radial symmetry dramatically simplifies the problem and allows for analytical solutions for the optimal behavior of all participants.
E.g., it is easy to see that only radially symmetric $\psi(\x)$ would need to be considered -- Albers further restricts this to piecewise-constant  radial functions, with positive rate of detection in an annulus between radial distances $d_e$ and $d_i$ from $\boundary$
and $\psi=0$ everywhere else. 
The same radial symmetry ensures that extractors travel along straight lines from $\boundary$ toward the center of $\domain$ and their entire strategy can be encoded by the chosen depth of penetration $d \in [0,1].$
Albers assumes that extractors travel to their chosen location unimpeded (since they have not broken any law yet), then turn around and start extracting resources as they retrace their steps to $\boundary$.  If they are detected by PAMs on their way back, the extracted resources are confiscated and they have to return to $\boundary$ empty-handed, incurring the time/opportunity cost without any reward.
Thus, each extractor chooses $d$ to maximize their expected profit
\begin{equation}
\label{eq:Albers_profit}
P(d) \; = \; \left( 1 - \Psi(d) \right) B(d) - C(d) \, = \, B(d) - \left( \Psi(d)  B(d) + C(d) \right),
\end{equation}
where $\Psi, B,$ and $C$ are respectively the cumulative probability of being detected, the value of extracted resources, and the cost of traversing the path -- all corresponding to going distance $d$ into $\domain$ before starting to extract resources. 
While this is not explicitly specified in \cite{albers2010spatial}, $C$ can also include the cost of getting to the point from which the extraction starts.  A more subtle point is that $B$ includes the value of {\em all} resources extracted along the way back.
In the general setting, this continuous-along-the-path extraction would induce a competition among the extractors searching for the best paths through $\domain$, but the radial symmetry mitigates this effect.  However, this makes it harder to justify the {\em one-period} nature of the model: as the extractors' actions deplete the resources in parts of the protected area, that would obviously change the future optimizations problems (faced by both PAMs and the extractors) even in a radially symmetric setting. Such dynamic consequences of ``incomplete enforcement'' are acknowledged in \cite{albers2010spatial} and also well-known in practice; e.g., \cite{Robinson_2008a, Robinson_2008b}.

A recent paper by D.J. Arnold et al  in \cite{arnold2018modeling} extended the above general approach to more realistic protected areas.  Their model allows for general domain geometries, resource distributions, terrain effects, and PAMs' strategies.  One important distinction is that, unlike Albers, they model the extraction as occurring at isolated points.  An advantage of this approach is that this reduces competition among the extractors (since the paths-to-$\boundary$ traversed by others don't affect the value of what you are currently trying to extract).  
For an extractor targeting a location $\x \in \domain,$ they define the expected profit as 
\begin{equation}
\label{eq:UCLA_profit}
P(\x) \; = \; B(\x) - \tilde{C},
\end{equation}
where $B$ is the value of resources available for extraction in a small neighborhood of $\x$,
while $\tilde{C}$ is the effective cost of a post-extraction trajectory, accounting both for the time/difficulty of travel from $\x$ to $\boundary$ and for the expected losses from a possible capture.
A perpetrator chooses this trajectory to minimize $\tilde{C}$ (and thus maximize $P$).
In \cite{arnold2018modeling} this optimization is performed implicitly by 
using a level set method originally developed for tracking the propagation of interfaces \cite{OsherSethian_LSM}.
The authors show that, in an Albers-style radially symmetric setting with unit walking speed of perpetrators, their approach produces the same $P(\x)$ as predicted by formula 
\eqref{eq:Albers_profit}.  
A comparison with \eqref{eq:UCLA_profit} thus suggests an explicit interpretation $\Ctilde(\x) = \Psi B(\x) + C.$

For any chosen constant $\Btilde>0,$ Arnold et al compute $\Ctilde(\x_0)$ simultaneously for all extraction positions $\x_0$ such that $B(\x_0) = \Btilde.$ 
The idea is to find equal-effective-cost curves 
by using the level sets of an auxiliary time-dependent function $\phi(\x,t)$ satisfying
a Hamilton-Jacobi PDE
\begin{equation}
\label{eq:LSM}
\phi_t + F(\x,\Btilde) |\nabla \phi| \; = \; 0, \qquad t>0, \x \in\domain,
\end{equation}
with initial conditions $\phi(\x,0) = $ distance from $\x$ to $\boundary.$  Assuming that $\Gamma(t)$ is some interface starting from $\Gamma(0) = \boundary$ and monotonically advancing into $\domain$ with the normal speed $F$, its evolution is encoded by the zeroth level-set of $\phi$ solving the above PDE; i.e.,
$\phi(\x, t)  = 0 \; \Longleftrightarrow \; \x \in \Gamma(t).$
For the purposes of this model,
the authors choose their normal speed of the interface to be
\begin{equation}
\label{eq:UCLA_F}
F(\x, \Btilde) \; = \; \frac{1}{ 1/ f(\x) \, + \, \alpha \psi(\x) \Btilde},
\end{equation}
where $f(\x)$ is the extractor's local speed of motion defined by the terrain at $\x$, and a parameter $\alpha>0$ encodes the degree of extractor's risk-aversion.  This formula is motivated phenomenologically: $1/f(\x)$ represents the extractor's ``slowness of motion''
and  $\psi(\x) \Btilde$ represents his expected losses (per unit time, while traveling through $\x$).  An increase in either of these terms should make  the effective cost  higher.  Once $\phi$ is computed, the authors define $\Ctilde$ implicitly through
$$
\phi(\x_0, \Ctilde(\x_0)) \; = \; 0, \qquad \forall \x_0 \text{ such that } B(\x_0) = \Btilde, 
$$
with an optimal path to $\boundary$ from any such $\x_0$ recovered via gradient descent in $\phi$.
The process is then repeated for multiple values of $\Btilde,$ until $\Ctilde$ (and thus also $P$) are defined for all gridpoints of some grid imposed on $\domain.$

In another departure from Albers' original model, 
the authors assume that extractors will target all locations where $P$ is within $(1-\varepsilon)$ factor from its maximal value; i.e., the extraction happens on the set 
$$
\domain_e \; = \; 
\left\{ 
\xbar  \in \domain \, \mid \, P(\xbar) \, \geq  \, (1-\varepsilon) \max\limits_{\x \in \domain} P(\x) 
\right\}.
$$ 
(One drawback of this assumption is that extraction will always happen on some part of $\domain$, regardless of how high the detection rate $\phi(\x)$ might be.)
The authors then define the pristine area by excluding not only the above but also the union of narrow corridors around optimal trajectories from $\domain_e$.  The latter are approximated by starting from a finite number of points selected uniformly at random in $\domain_e$.
Since there is no extraction along the path, the motivation for this exclusion 
might perhaps be based on extraction-unrelated damage to the protected area. 
While \cite{arnold2018modeling} does not present a systematic method for optimizing PAMs' choice of $\psi$, the authors use the above interpretation to compare the effectiveness of several (ad hoc) patrol strategies staying within the specified patrol budget.  
Their simulations (using $\gamma = 1, \alpha = 1,$ and $\varepsilon = 0.085$) are based on the terrain data from Yosemite National Park in California and Kangaroo Island in South Australia, which they have also kindly shared with us to conduct the numerical experiments presented in section \ref{s:Examples}. 

We end this section by discussing the difference between path planning to avoid detection vs capture/interception.  Even if the former always leads to the latter, there is still a very subtle modeling question of whether the capture happens immediately upon detection and, if there is a significant delay, whether the extractors learn that they have been detected before the capture.  These issues are not important in \cite{albers2010spatial} since the extractors' paths are always radial in Albers' setting.  But Arnold et al describe path-planning that strikes a balance between the time of travel and risk of detection all the way to the boundary --  with no provision for switching to quicker/easier paths if authorities manage to intercept the extractors en route and confiscate their haul. Thus, we believe that the model in \cite{arnold2018modeling} is more suitable for the setting where the extractors remain oblivious of their detection (perhaps in the case of aerial surveillance) and are only intercepted upon reaching $\boundary$.  

\section{Extractor's path-planning}
\label{s:ProblemDescription}

This section defines the optimization problem faced by a perpetrator 
deciding whether to extract resources from a fixed location $\x \in \domain$.
As in \cite{arnold2018modeling}, our approach is based on ``pointwise'' extraction decisions, but since these resources are typically continuously distributed in the protected area $\domain,$ this requires further clarification.  We will consider a fine grid $\X$ of possible extraction sites, with each $\x \in \X$ representing a center of its extraction cell $N_{\x},$ and $B(\x)$ representing the value of all resources that can be extracted from 
that cell. 
The same grid $\X$ will be also used to solve the PDEs numerically in section \ref{s:Numerics}.\\

\noindent
We will further assume that
\begin{enumerate}
\item
The extractor can enter 
$\domain$ through any part of $\boundary$ and has no reason to avoid surveillance from PAMs until he reaches the extraction site $\x.$
\item
While traveling through $\domain$, the extractor may freely choose any measurable ``control function'' $\ba: \R \to S^1,$ specifying his chosen direction of motion for all times $t$.  This defines the corresponding trajectory since his (isotropic) speed of motion $f:\domain \to \R_+$ 
reflects the local terrain and is a part of the input data.
\item 
The time/difficulty cost of traveling through $\domain$ is reflected by integrating some known running cost $K:\domain \to \R_+$ along his chosen trajectory.  One natural choice is $K(\x)=\kappa,$ reflecting the monetary value the extractor associates with each hour of his time spent on a trip. (To normalize the units, all of our numerical examples use $\kappa = 1$ for all pre-extraction and post-extraction travel through $\domain.$)  But a non-constant $K$ can be also used to reflect the extractor's distaste for some parts of the protected area (e.g., due to vegetation). 
\item
Once he reaches $\x$,  the extraction in $N_{\x}$ is instantaneous and the extractor immediately starts along his (possibly different) path to the boundary.
\item
On the way back to $\boundary$, he is trying to minimize both the time/cost of travel and the probability of being spotted  by PAMs through aerial surveillance.
\item
The extractor has full prior knowledge of PAMs' location-dependent detection rate $\psi: \domain \to \R_{+,0}.$
\item
Rational extractors will target all sites $\x \in \X$ for which the expected profit $P(\x)$ of their optimal roundtrip is above some threshold level $\tilde{p}$.  
This leaves a pristine area
$$
\pDomain \; = \; \{ \x \, \mid \, P(\x) \leq \tilde{p} \}.
$$  
(Our numerical experiments use $\tilde{p}=0.$) 
Note that this is different from the approach in \cite{arnold2018modeling} not only in providing an absolute threshold, but also because we don't exclude from  $\pDomain$ any corridors 
around post-extraction trajectories.  We believe that this is more consistent with the notion of 
localized extractions. 
\end{enumerate}

Computing the expected profit $P(\x)$ requires solving two different optimization problems:
(a) finding the best (PAMs-ignoring) pre-extraction trajectory from $\boundary$ to $\x$ and then (b) finding the best post-extraction trajectory from $\x$ to $\boundary$, balancing the cost of travel and risk of detection.  We will start by addressing (a) 
in section \ref{ss:pre-extract}
since it is easy to solve by standard tools of {\em single-criterion deterministic optimal control theory}.
Below we provide only a very brief introduction, referring to standard references (e.g., \cite{bressan2007introduction} and \cite{bardicapuzzodolcetta}) for technical details.

We will then use two models to address (b) under very different assumptions about the nature of PAMs' enforcement activities and the information available to perpetrators: \\

\noindent
{\bf Model G}  (ground patrols): \\
If the extractor is detected while traveling with protected resources, he is immediately apprehended and the resources are confiscated.  From there on, the perpetrator does not attempt extracting anything on this trip and simply seeks to minimize the integral of $K$ on his way to $\boundary.$\\

\noindent
{\bf Model A}  (aerial patrols): \\
The extractor has no way of learning if or when the detection occurs.  But if it happens, he is later apprehended with probability one upon reaching $\boundary$ and his entire haul is confiscated.\\

We will show that these two interpretation require fairly different mathematical tools:  Model G is best recast as an optimal control of a randomly-terminated process (subsection \ref{ss:post-extraction_G}), while Model A requires the techniques from multi-objective dynamic programming (subsection \ref{ss:post-extraction_A}).  The numerical methods for both versions are covered in section \ref{s:Numerics}.

\subsection{Getting to the extraction point}
\label{ss:pre-extract}
We will use $R(\x)$ to denote the cost of the optimal pre-extraction trajectory from 
$\boundary$ to $\x$. 
But to make the discussion closer to the canonical optimal control setting, we will 
reverse the direction of the trajectory in (a), looking instead for a $K$-optimal way to reach 
$\boundary$ from $\x$. (Based on our isotropy assumptions, this direction reversal will change neither the time of travel nor the integral of $K$ along any considered trajectory.)  The added benefit is that the same discussion will be a useful starting point for solving (b).

The extractor's time-dependent position in the domain will be specified by 
\begin{equation}\label{eq:dynamics}
\y'(t) \; = \; f\left( \y(t) \right) \ba(t), \qquad \y(0) \; = \; \x,
\end{equation}
Once  $\ba(\cdot)$ is chosen, this defines the exit time $T_{\x,\ba(\cdot)} \; =  \; \min \{t \ge 0 \, | \, \y(t) \in \boundary \}$ and the cumulative cost of starting at $\x$ and using $\ba(\cdot):$
$$
\J \left( \x, \ba(\cdot) \right) \; = \; \int_0^{T_{\x,\ba(\cdot)}}K(\y(t)) \, dt.
$$
If the extractor only cares about selecting a path to minimize this $\J,$ it is easy to accomplish by traditional tools of single-criterion dynamic programming introduced by Richard Bellman in the 1950s. 

The {\em value function} $u(\x)$ is defined as the minimum cost one has to pay starting from $\x$; i.e.,
$u(\x) = \inf_{\ba(\cdot)} \J \left( \x, \ba(\cdot) \right).$ 
A Taylor series expansion along the optimal trajectory can be used to derive
a Hamilton-Jacobi-Bellman (HJB) PDE that $u$ must satisfy if it is sufficiently smooth.  For our isotropic cost and dynamics, this PDE is merely an Eikonal equation
\begin{equation}
\label{eq:Eikonal}
|\nabla u (\x) | f(\x) \; = \; K(\x)    
\end{equation}
solved with Dirichlet boundary conditions $u=0$ on $\boundary$.
Unfortunately, in general \eqref{eq:Eikonal} does not have a classical/smooth solution, and there are infinitely many weak (Lipschitz-continuous) solutions. Additional test conditions were introduced by Crandall and Lions \cite{crandall1983viscosity} to  select among them the unique {\em viscosity solution} coinciding with the value function of the above control problem.  
Several highly efficient  methods for solving Eikonal PDEs numerically were developed in the past 25 years; see a brief overview at the end of section \ref{s:Numerics}. 

We note that taking $K=1$ results in $J \left( \x, \ba(\cdot) \right) = T_{\x,\ba(\cdot)}.$
So, both the distance-from-boundary $d(\x)$ and the min-time-from-boundary $\tau(\x)$ can be also found by solving the Eikonal equations
$|\nabla d|=1$ and $|\nabla \tau| f = 1$ with zero boundary conditions. If $K=\kappa$, then the minimum cost of reaching $\x$ from $\boundary$ is 
simply
$$
R(\x) = \kappa \tau(\x).
$$ 

The characteristic curves of HJB equations are the optimal trajectories, which in the Eikonal case coincide with the gradient lines of the viscosity solution.  I.e., once the value function is computed, an optimal trajectory can be found via gradient descent in $u$.  Since this function is differentiable almost everywhere, the optimal trajectories are unique for almost all $\x \in \domain.$

\begin{remark}
We note that the function $\Ctilde$ used in \cite{arnold2018modeling} can also be found directly (without using a level-set formulation) by solving an Eikonal equation $|\nabla \Ctilde| F = 1.$  
While the level set method is a much more general interface tracking approach, the fast Eikonal solvers are likely to be more efficient for the current application.
\end{remark}

\subsection{Returning with the loot (Model G: ground patrols)}
\label{ss:post-extraction_G}
Here we assume that a detection leads to an immediate apprehension and confiscation of extracted resources.  Thus, it is natural to split the post-extraction trajectory into two parts:
a pre-detection trajectory (leading all the way to $\boundary$ if the detection was avoided) and a post-detection trajectory (chosen by a perpetrator to simply minimize the integral of $K$ up to $\boundary$ after the confiscation).  
Suppose the perpetrator starts from an extraction point $\x_0$ and manages to reach some $\x \in \domain$ undetected.  From there on, he uses a control $\adot,$ moving from $\x = \y(0)$ along a trajectory $\y(t)$ with a possible detection at some time $T$ before reaching the boundary.  His remaining cost is
$$
\J(\x, \adot, T) \; = \; 
\begin{cases}
\int_0^{T} K(\y(t)) \, dt \, + \, B(\x_0) \, + \, R \left(\y(T) \right),  
& \text{ if } T <  T_{\x,\ba(\cdot)} \\
& \text{(i.e., if detected)}\\ \\
\int_0^{T_{\x,\ba(\cdot)}} K(\y(t)) \, dt,  & \text{ otherwise.}
\end{cases}
$$
Of course, the time of capture $T$ is random and its distribution depends on the chosen pre-detection trajectory.
So, the perpetrator's value function is defined to minimize the expected remaining cost (from $\x$ to $\boundary$):  
$$
\ubar(\x) \, = \, \inf_{\adot} \E_T^{} \left[  \J(\x, \adot, T) \right].
$$
We note that it also depends on the value of extracted resources, $\Btilde = B(\x_0)$, but we leave this implicit to simplify the notation.

To obtain the Bellman optimality condition, we assume that the detection will not happen for the next $s$ seconds, but might occur after that period with probability $s \psi(\x).$  
This yields
$$
\ubar(\x) \, = \, \inf_{\adot} \left\{  \int_0^{s} K(\y(t)) \, dt  \, + \, 
s \psi(\x) \left[ \Btilde + R \left(\y(s) \right) \right]
\, + \, 
\left( 1 - s \psi(\x) \right) \ubar(\y(s)) \right\} \, + \, o(s).
$$
Assuming that $\ubar$ is sufficiently smooth, a Taylor expansion of the above yields a Hamilton-Jacobi PDE
\begin{align}
\label{eq:ubar}
|\nabla \ubar(\x) | f(\x)  \; &= \;  K(\x)  + \psi(\x)  \left( \Btilde + R(\x) - \ubar(\x) \right), 
\qquad
& \text{on } \domain;\\ 
\nonumber
\ubar(\x) \; & = \; 0,
\qquad
& \text{on } \boundary. 
\end{align}
Since this PDE generally does not have a classical/smooth solution, that derivation is only formal, but viscosity solution theory \cite{crandall1983viscosity, bardicapuzzodolcetta} allows us to pick the weak solution coinciding with the value function. 
The above derivation is similar to what is used in optimal control of {\em randomly-terminated process}.    Such processes arise in
production/maintenance planning \cite{Boukas1990}, economic growth and global climate change modeling \cite{HaurieMoresino2006},
multi-generational games \cite{Haurie2005}, and optimizing the routing of emergency response vehicles \cite{AndrewsVlad}. 
In the current context, ``termination'' is either ``detection/capture'' (since we already know the cost of optimal actions after that point) or ``reaching $\boundary$ undetected''.  The latter possibility yields Dirichlet boundary conditions in \eqref{eq:ubar} and avoids the issues related to ``free boundary'' and quasi-variational inequalities considered in \cite{AndrewsVlad}.

Once $\ubar$ is found, the optimal pre-detection trajectories can be found by gradient descent.  The extractor's expected profit can be also computed as
$$
\PG(\x_0) \; = \; \Btilde - \ubar(\x_0) - R(\x_0),
\qquad 
\forall \x_0 \in X \text{ such that } B(\x_0) = \Btilde,
$$
where $R(\x_0)$ accounts for the cost of a pre-extraction trajectory.

We note that it is easy to find an upper bound on $\ubar$ by committing to use some $\adot$ up until $\boundary$ even if the detection occurs earlier.  
If that $\adot$ is selected to minimize the integral of $K,$  this yields  $\ubar(\x_0) \leq \Btilde + R(\x_0),$  but a sharper bound is obtained by choosing
a control optimal for Model A considered below.  This is why $\PG \geq \PA$ and $\pDomainG \subset \pDomainA.$

\subsection{Returning with the loot (Model A: aerial patrols)}
\label{ss:post-extraction_A}
On the way back to $\boundary$, the extractor needs to balance the cumulative cost of the path with expected losses resulting from a possible detection by PAMs.  
This is even more important under the current model, where the extractor does not learn whether he was detected until reaching $\boundary$. 
Here we formally derive the probability that an extractor is not detected while traversing a path $\y: [0,T] \to \domain.$
Suppose the probability of detection at a location $\x$ over a small time interval $ \Delta t$ is $\psi(\y) \Delta t$. 
Then the probability $\mathcal{P}_N$ of making it through $N$ such consecutive intervals of length $\Delta t=T/N$ without ever being detected is:
\begin{equation*}
\mathcal{P}_N = \left(1-\psi(\y_1)\Delta t \right)  \left(1- \psi(\y_2) \Delta t \right) \cdots  \left(1-\psi(\y_N) \Delta t \right) 
\; = \; \prod_{i=1}^N  \left(1-\psi(\y_i) \Delta t \right).
\end{equation*}
Taking the logarithm of both sides,
\begin{equation*}
\log(\mathcal{P}_N) = \sum_{i=1}^N  \log\left(1-\psi(\y_i) \Delta t\right) \; = \; -\sum_{i=1}^N \left( \psi(\y_i) \Delta t + \mathcal{O}\left(\Delta t^2\right)\right).
\end{equation*}
Taking the limit as $N \to \infty$, the probability of not being detected along this path until at least the time $T$ is 
\begin{equation}\label{eq:detection_probability}
\mathcal{P} = e^{-\int_0^T\psi(\y(t))dt}.
\end{equation}

Our emphasis on careful modeling of the cumulative probability of detection $\Psi = 1 - \mathcal{P}$ 
is perhaps the main distinction from earlier models. 
Both \cite{albers2010spatial} and \cite{arnold2018modeling} 
implicitly assume
$\Psi \approx \int_0^T\psi(\y(t))dt.$ 
As a first-order approximation of a concave function ($1- e^{-\xi} = \xi + O(\xi^2)$), this overestimates $\Psi$ for every trajectory.
It also yields a simplified optimization problem, but with significant limitations described in Remark \ref{rem:linearize}. 
Instead, we find the extractor's expected profit by using the accurate $\mathcal{P}$ to optimize his post-extraction trajectory:
\begin{align}\label{eq:multi_objective}
\begin{split}
&\y'(t) = f(\y)\ba(t), \qquad \y(0) = \x \\
&T_{\x,\ba(\cdot)} =  \min \{t \ge 0 \, | \, \y(t) \in \boundary \} \\
&\J_1(\x,\adot) = \int_0^{T_{\x,\ba(\cdot)}} \psi(\y(t))dt\\ 
&\J_2(\x,\adot) = \int_0^{T_{\x,\ba(\cdot)}} K(\y(t))dt\\
\PA(\x) = &\sup_{\ba(\cdot) \in \A} \left\{ B(\x)e^{-\J_1(\x,\adot)} - \J_2(\x,\adot)\right\} - R(\x),
\end{split}
\end{align}
where $\A$ is the set of \emph{admissible} controls, i.e., the set of measurable functions from $\R$ to $S^1,$ and
$R(\x)$ is the optimal cost of a pre-extraction path defined in subsection \ref{ss:pre-extract}.

\begin{remark}
\label{rem:linearize}
If $\J_1(\x_0,\adot) \ll 1$  for all $\x_0$ and all controls $\ba(\cdot)$ close to optimizing 
\eqref{eq:multi_objective},  a linear approximation of the detection probability $\Psi \approx \J_1$ is reasonable and leads to a significantly simpler optimization problem
similar to those considered in \cite{albers2010spatial} and \cite{arnold2018modeling}.  
In that case, 
\begin{equation}
\label{eq:linearizedPaApprox}
\PA(\x) \; \approx \; B(\x) - \inf_{\ba(\cdot) \in \A} \biggl\{ B(\x) \J_1(\x,\adot) + \J_2(\x,\adot)\biggr\} \, -  \, R(\x).
\end{equation}
Since the linearized approach exaggerates $\Psi$,
we know that this estimate is actually a lower bound on $\PA(\x).$
The minimizing control in \eqref{eq:linearizedPaApprox} can be found by 
solving the Eikonal PDE 
\begin{equation}
\label{eq:linearizingEikonal}
|\nabla u| f(\x) \; = \; B(\x_0) \psi(x) + K(\x)
\end{equation}
with zero boundary conditions.  
If one prefers the level set formulation, this is equivalent to using 
the interface speed of
$$
F(\x, \Btilde) = f(\x) \, /  \left( K(\x) + \Btilde \psi(\x) \right),
$$
which is similar to but not quite the same as the formula \eqref{eq:UCLA_F} 
used in \cite{arnold2018modeling} --
even in the case of risk neutral extractors (i.e., $\alpha = 1$)
concerned with their time along the trajectory (i.e., $K(\x)=1$).

Unfortunately, $\J_1$ can be arbitrarily large, which makes this ``$\Psi$-linearization'' unsuitable for most realistic situations.  
(Indeed, in our numerical examples of section \ref{s:Examples}, $\J_1 > 1$ is often observed along many optimal paths.)
This is the key reason why the more complicated multi-objective path-planning methods considered below are actually needed.  
\end{remark}

\begin{remark}
\label{rem:higher_dim}
The non-linear dependence on $\J_1$ is the reason why $\PA$ cannot be computed by standard tools of single-objective optimal control on $\domain.$  But this actually can be accomplished if we are willing to increase the dimension of our planning space.  We note that $\mathcal{P}$ satisfies an ODE $\mathcal{P}'(t) = -\psi(\y(t)) \mathcal{P}(t)$ along the chosen trajectory, with
$\mathcal{P}(0) = 1, \, \y(0) = \x_0.$  One could define a new value function $w(\x, p)$ as the minimum expected cost of the remaining path to $\boundary$ if the extractor has already moved from $\x_0$ to  $\x$ and the probability that he has not been detected so far is $p.$  A formal control-theoretic argument shows that $w$ should satisfy the following Hamilton-Jacobi PDE on $\domain \times (0,1]:$
\begin{equation}
\label{eq:xp_HJB}
\psi(\x) p \frac{\partial w}{\partial p} \, + \, | \nabla_{\x} w | f(\x) 
\; = \;  K(\x)
\end{equation}
with boundary conditions $w(\x,p) = (1-p) \Btilde$ on $\boundary \times (0,1].$
Once $w$ is computed,  the expected profit can be found as
$\PA(\x_0) = \Btilde - w(\x_0,1) -P_0(\x_0)$ for every $\x_0$ such that $B(\x_0) = \Btilde.$
The need to solve this PDE on a higher dimensional domain for many $\Btilde$ values
would make this approach computationally expensive, with additional numerical difficulties due to that fact that the coefficient $(\psi(\x) p)$ can be arbitrarily small or even zero.  This is why we opt instead to use multi-objective dynamic programming on the original planning space $\domain.$
\end{remark}

\section{Multi-Objective Approach (Model A)}
\label{s:MultiObjective}

For convenience, we will define $$\F(\J_1,\J_2) = Be^{-\J_1}-\J_2,$$ and in a slight abuse of notation, drop the first argument from both $\J_1$ and $\J_2$ (since $\x$ is fixed throughout this section), and also drop the argument from $\adot$.

Ideally, one would prefer to find a control minimizing both $\J_1$ and $\J_2$ simultaneously.  Somewhat surprisingly this is actually possible when $K$ is constant and PAMs use ``$\tau$-banded'' patrol densities; i.e., when $\psi$ is really a function of the min-time-from-$\boundary$
$\tau(\x)$ defined in section \ref{s:ProblemDescription}.  
It is not hard to show that in this case the trajectories recovered from the Eikonal \eqref{eq:Eikonal} will also minimize the probability of detection.
When $f$ is also constant, such trajectories will be simply straight lines (from the extraction point 
$\x$ to the closest point on $\boundary$), which will also minimize the probability of detection for all
``distance-to-$\boundary$-banded'' patrol strategies  $\psi(d)$ (including the homogeneous $\psi(\x) = \psi_0$) considered both in \cite{albers2010spatial} and \cite{arnold2018modeling}.  
This is also the setting where there is no difference between the predictions based on Model A and Model G -- even if the loot is confiscated, there is no incentive to switch to another trajectory.

For the above scenarios the extractor's path-planning is fairly trivial.  But in the general case, these two minimization criteria are in conflict, and one has to switch to a notion of {\em Pareto optimality}.

\subsection{Pareto Front}\label{ss:PF}

We will say that a control $\ba_1$ \emph{dominates} another control $\ba_2$ if $\J_1(\ba_1) \le \J_1(\ba_2)$ and $\J_2(\ba_1) \le \J_2(\ba_2)$ with at least one of those inequalities strict. 
All $\ba$'s not dominated by any other control are called {\em Pareto optimal}.  They correspond to trajectories that cannot be improved with respect to both criteria simultaneously.
It will be useful to consider the \emph{Pareto Front} 
$$
PF \; = \; \left\{
\left (\J_1(\ba), \J_2(\ba) \right)
\, \mid \, \ba \text{ is Pareto optimal } 
\right\}.
$$

In addition to control functions,  the extractor could in principle also consider mixed/probabilistic strategies, which specify a probability distribution over a set of available controls.  A strategy selecting any control with probability one is usually called pure or deterministic.  
Since $\F$ is decreasing in both arguments, it follows that $\F\left(\J_1(\ba_1),\J_2(\ba_1)\right) > \F\left(\J_1(\ba_2),\J_2(\ba_2)\right)$ for any control $\ba_1$ dominating $\ba_2$.
From this we see that any optimizer of \eqref{eq:multi_objective} over the set of pure strategies must belong to PF.
We also note that the extractor has no incentive to consider mixed strategies, since the expected payoff for using strategy $\ba_1$ with probability $\theta$ and $\ba_2$ with probability $1-\theta$ is
\begin{equation*}
\theta \F\bigl(\J_1(\ba_1),\J_2(\ba_1)\bigr) + (1-\theta)\F\bigl(\J_1(\ba_2),\J_2(\ba_2)\bigr) \, \le \, \max_{i=1,2} \F\bigl(\J_1(\ba_i),\J_2(\ba_i))
\end{equation*}
So in practice, we will compute $\PA(\x)$ by maximizing $\F(\J_1,\J_2)$ over the $(\J_1,\J_2) \in PF.$

There are a number of techniques for finding Pareto-optimal controls including {\em scalarization}  on $\domain$ \cite{mitchell2003continuous} and other methods relying on augmented PDEs on an expanded state space \cite{kumar2010efficient, DesillesZidani}.
To scalarize, one chooses some $\lambda \in [0,1]$ and optimizes a convex combination $\J^{\lambda} = \lambda \J_1 + (1-\lambda) \J_2$.  The resulting $\lambda$-optimal trajectory 
is also Pareto-optimal (see Figure \ref{fig:pf_scal}),  
and the procedure is applied repeatedly for different $\lambda$ values.
Unfortunately, such scalarization only finds points on the convex part of the Pareto Front \cite{das1997closer}.
This is a serious disadvantage since non-convexities of PF are both common and important in many control applications including robotic navigation \cite{kumar2010efficient, UTRC_2}.
In other cases, such as our recent work on surveillance-evasion games \cite{Static_SEG, TD_SEG}, special features of the problem might make scalarization sufficient.  Below we explain why this is also the case for
the post-extraction path-planning.

\begin{figure}[hhhh]
\begin{center}
\def\tikzPfScale{0.43}
\begin{tabular}{c c c}
\begin{tikzpicture}[
point/.style={draw,shape=circle,inner sep = 0mm,minimum size = .2cm},
scale = \tikzPfScale,
>=stealth',
shorten >=1pt,
shorten <=1pt,
auto,
node distance = 1.5cm,
semithick]

\def\maxx{5}
\def\maxy{5}

\coordinate (P) at (0.39*\maxx, 0.3*\maxy);

\fill [gray!30!white] (0,0) rectangle (P); 
\node[point] (Ppt) at (P) {};
\node[right]       (Plb) at (P) {$S$};

\coordinate (X1) at (0.05*\maxx, 1.00*\maxy);
\coordinate (Xn) at (1.00*\maxx, 0.05*\maxy);

\draw[thick]
	(X1) to[out=-85,in=175] 
	(Xn);
	
\node[point,fill=black] at (X1) {};
\node[point,fill=black] at (Xn) {};

\coordinate (H1) at (-1,0);
\coordinate (H2) at (\maxx+1,0);
\coordinate (V1) at (0,-1);
\coordinate (V2) at (0,\maxy+1);

\draw[->] (H1) -- (H2);
\draw[->] (V1) -- (V2);

\node[right] (T) at (H2) {\footnotesize $J_1$};
\node[below left] (D) at (V2) {\footnotesize $J_2$};

\coordinate (PL) at (0.39*\maxx + 2.0, 0.3*\maxy + 2.0);
\draw[->, thick] (P) to (PL);
\node[right] (PLlb) at (PL) {\scriptsize $\bm{n}_{\lambda}$};

  \tkzDefPoint(0.38*\maxx, 0.31*\maxy){Ppgf}
  \tkzDefPoint(0.40*\maxx, 0.29*\maxy){Rpgf}
  \tkzDefLine[parallel=through Ppgf](Ppgf,Rpgf)
  \tkzDrawLine[add = 25 and 25, color=gray, dashed](Ppgf,tkzPointResult)
  
\end{tikzpicture}
&
\begin{tikzpicture}[
point/.style={draw,shape=circle,inner sep = 0mm,minimum size = .2cm},
scale = \tikzPfScale,
>=stealth',
shorten >=1pt,
shorten <=1pt,
auto,
node distance = 1.5cm,
semithick]

\def\maxx{5}
\def\maxy{5}

\coordinate (P) at (0.25*\maxx, 0.25*\maxy);

\node[point,fill=white] (Ppt) at (P) {};
\node[below left]       (Plb) at (P) {$S$};

\coordinate (X1) at (0.15*\maxx, 1.00*\maxy);
\coordinate (X2) at (0.25*\maxx, 0.25*\maxy);
\coordinate (Xn) at (1.00*\maxx, 0.15*\maxy);

\draw[thick]
	(X1) to[out=-89,in=100] 
	(X2) to[out=-10,in=179]
	(Xn);
	
\node[point,fill=black] at (X1) {};
\node[point,fill=black] at (Xn) {};

\coordinate (H1) at (-1,0);
\coordinate (H2) at (\maxx+1,0);
\coordinate (V1) at (0,-1);
\coordinate (V2) at (0,\maxy+1);

\draw[->] (H1) -- (H2);
\draw[->] (V1) -- (V2);

\node[right] (T) at (H2) {\footnotesize $J_1$};
\node[below left] (D) at (V2) {\footnotesize $J_2$};

  \tkzDefPoint(0.24*\maxx, 0.28*\maxy){Ppgf}
  \tkzDefPoint(0.26*\maxx, 0.20*\maxy){Rpgf}
  \tkzDefLine[parallel=through Ppgf](Ppgf,Rpgf)
  \tkzDrawLine[add = 10 and 5, color=gray, dashed](Ppgf,tkzPointResult)
  
  \tkzDefPoint(0.2*\maxx, 0.26*\maxy){Ppgf2}
  \tkzDefPoint(0.32*\maxx, 0.225*\maxy){Rpgf2}
  \tkzDefLine[parallel=through Ppgf2](Ppgf2,Rpgf2)
  \tkzDrawLine[add = 3 and 6, color=gray, dashed](Ppgf2,tkzPointResult)
  
  \tkzDefPoint(0.2*\maxx, 0.32*\maxy){Ppgf3}
  \tkzDefPoint(0.3*\maxx, 0.2*\maxy){Rpgf3}
  \tkzDefLine[parallel=through Ppgf3](Ppgf3,Rpgf3)
  \tkzDrawLine[add = 3.5 and 3, color=gray, dashed](Ppgf3,tkzPointResult)

\end{tikzpicture}
&
\begin{tikzpicture}[
point/.style={draw,shape=circle,inner sep = 0mm,minimum size = .2cm},
scale = \tikzPfScale,
>=stealth',
shorten >=1pt,
shorten <=1pt,
auto,
node distance = 1.5cm,
semithick,
 extended line/.style={shorten >=-#1,shorten <=-#1},
 extended line/.default=1cm,
 one end extended/.style={shorten >=-#1},
 one end extended/.default=1cm,]

\def\maxx{5}
\def\maxy{5}

\coordinate (P) at (0.125*\maxx, 0.635*\maxy);
\coordinate (R) at (0.7*\maxx, 0.16*\maxy);

\node[point,fill=white] (Ppt) at (P) {};
\node[below left]       (Plb) at (P) {$S$};
\node[point,fill=white] (Rpt) at (R) {};
\node[above right]      (Rlb) at (R) {$Q$};

\coordinate (X1) at (0.05*\maxx, 1.00*\maxy);
\coordinate (X2) at (0.20*\maxx, 0.60*\maxy);
\coordinate (X3) at (0.50*\maxx, 0.50*\maxy);
\coordinate (X4) at (0.60*\maxx, 0.35*\maxy);
\coordinate (Xn) at (1.00*\maxx, 0.05*\maxy);

\draw[thick]
	(X1) to[out=-80,in=170] 
	(X2) to[out=-10,in=135] 
	(X3) to[out=-45,in=100]
	(X4) to[out=-80,in=180]
	(Xn);
	
\node[point,fill=black] at (X1) {};
\node[point,fill=black] at (Xn) {};

\coordinate (H1) at (-1,0);
\coordinate (H2) at (\maxx+1,0);
\coordinate (V1) at (0,-1);
\coordinate (V2) at (0,\maxy+1);

\draw[->] (H1) -- (H2);
\draw[->] (V1) -- (V2);

\node[right] (T) at (H2) {\footnotesize $J_1$};
\node[below left] (D) at (V2) {\footnotesize $J_2$};

\draw[extended line, dashed, gray] (P) -- (R);

\end{tikzpicture}
\\
(A) & (B) & (C)
\end{tabular}
\end{center}
\caption{ 
Common Pareto Front scenarios.
	\textsc{(a)} Convex smooth PF
	with a point $S$ corresponding to some specific $\lambda$.
	The dashed line perpendicular to $\bm{n}_{\lambda} =  (\lambda, 1-\lambda)$ 
	is tangent to PF at $S$.
	If any path yielded a $(\J_1, \J_2)$ point below this line, the path corresponding to $S$ would not be $\lambda$-optimal. Since the grey rectangle (comprised of points dominating $S$) lies below that line,
	$\lambda$-optimality implies Pareto optimality $\forall \lambda \in (0,1).$
	\textsc{(b)} Convex non-smooth PF with a `kink' at the point $S$ makes the corresponding path $\lambda$-optimal for a range of $\lambda$'s , with a different ``support hyperplane'' corresponding to each of them.  
	\textsc{(c)} Non-convex smooth PF. Points $S$ and $Q$ correspond to 2 different $\lambda$-optimal paths.
	The portion of $PF$ between $S$ and $Q$ cannot be found by scalarization.
}
\label{fig:pf_scal}
\end{figure}

\begin{observ}
Any global maximizer of \eqref{eq:multi_objective} will correspond to a point on the convex part of PF.
\end{observ}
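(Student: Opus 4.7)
The plan is to show that any global maximizer $\ba^*$ of $\F(\J_1,\J_2) = B e^{-\J_1} - \J_2$ over Pareto-optimal controls corresponds to a point $S = (\J_1^*, \J_2^*) \in PF$ that admits a supporting line of the form $\lambda \J_1 + (1-\lambda)\J_2 = c$ with $\lambda \in (0,1)$, i.e.\ that $S$ is $\lambda$-optimal in the sense of scalarization. This is precisely the characterization of ``convex part of PF'' suggested by Figure~\ref{fig:pf_scal}(A)--(B).

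The first step is to identify the level curve of $\F$ through $S$. Writing $c^* = \F(\J_1^*,\J_2^*)$, the set $\{\F(\J_1,\J_2) = c^*\}$ is the graph of the function $\J_2 = g(\J_1) := B e^{-\J_1} - c^*$. Since $g''(\J_1) = Be^{-\J_1} > 0$, this curve is strictly convex in the $(\J_1,\J_2)$ plane. Because $\F$ is strictly decreasing in $\J_2$ and $S$ is a global maximizer, every achievable $(\J_1(\ba),\J_2(\ba))$ (hence in particular every point of $PF$) satisfies $\F \le c^*$, which is equivalent to lying \emph{on or above} the convex curve $\J_2 = g(\J_1)$.

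The second step is to pass from the convex curve to its tangent line at $S$. Since $g$ is differentiable and convex, it lies above its tangent $\ell$ at $S$, whose slope is $g'(\J_1^*) = -Be^{-\J_1^*} < 0$. Combining with the previous step, $PF$ also lies on or above $\ell$, and $S \in \ell \cap PF$. Rewriting $\ell$ in the form $\lambda \J_1 + (1-\lambda)\J_2 = c$ requires $-\lambda/(1-\lambda) = -Be^{-\J_1^*}$, which gives the explicit
\begin{equation*}
\lambda^* \; = \; \frac{Be^{-\J_1^*}}{1 + Be^{-\J_1^*}} \; \in \; (0,1).
\end{equation*}
Thus $S$ minimizes $\J^{\lambda^*} = \lambda^* \J_1 + (1-\lambda^*)\J_2$ over all admissible controls, so $S$ is $\lambda^*$-optimal.

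The final step is to interpret ``$\lambda$-optimal for some $\lambda \in (0,1)$'' as ``lying on the convex part of $PF$''. This is exactly the geometric picture of Figure~\ref{fig:pf_scal}: a point on a non-convex piece of $PF$ (as in panel (C), between $S$ and $Q$) admits no supporting line from below with positive-sloped-normal in the first quadrant, while any point on a convex piece does (with a unique $\lambda$ at smooth points, and a range of $\lambda$'s at kinks, as in panel (B)). I do not expect any serious obstacle beyond this identification; the only mild subtlety is to make sure that $\lambda^* \in (0,1)$ strictly (which follows from $B > 0$ and $e^{-\J_1^*} > 0$, so neither endpoint $\lambda = 0$ nor $\lambda = 1$ occurs), guaranteeing that $S$ is genuinely on the convex portion rather than at a trivial extremum of one objective alone.
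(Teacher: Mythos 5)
Your proof is correct, but it runs in the opposite direction from the paper's. The paper argues by contraposition: a point on a non-convex section of $PF$ is, by definition, dominated componentwise (with one strict inequality) by a convex combination $\theta\left(\J_1(\ba_1),\J_2(\ba_1)\right) + (1-\theta)\left(\J_1(\ba_2),\J_2(\ba_2)\right)$ of two other achievable points; since $\F$ is strictly decreasing and convex in $(\J_1,\J_2)$, its value there is strictly below $\max_{i}\F\left(\J_1(\ba_i),\J_2(\ba_i)\right)$, so such a point cannot be a global maximizer. You instead give a direct supporting-line argument: the level curve of $\F$ through the maximizer is convex, every achievable point lies on or above it and hence on or above its tangent, which produces an explicit scalarization weight $\lambda^* = Be^{-\J_1^*}/(1+Be^{-\J_1^*}) \in (0,1)$ for which the maximizer is $\lambda^*$-optimal. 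The two formulations are dual and both hinge on the convexity of $\xi \mapsto e^{-\xi}$. The paper's version is shorter and uses only the abstract hypotheses that $\F$ is decreasing and convex, so it transfers verbatim to other payoffs; yours needs differentiability of the level curve (a subgradient would be required for a general convex $\F$) but buys something the paper's does not: the closed-form $\lambda^*$, which is precisely the ``corrected'' analogue of $\lambda_{\sharp} = B/(B+1)$ from Remark \ref{rem:P_lower_bound} and explains why the $\lambda$-grid search in \eqref{eq:lambda_discrete} is guaranteed to bracket the true maximizer. One small remark: your last step treats ``$\lambda$-optimal for some $\lambda\in(0,1)$'' as synonymous with lying on the convex part of $PF$; that is consistent with how the paper uses the term (Figure \ref{fig:pf_scal}), and the strict inclusion $\lambda^*\in(0,1)$ also delivers Pareto optimality of the maximizer for free, a fact the paper establishes separately just before the Observation.
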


\begin{proof}
Suppose a control $\ba$ corresponds to a point $\left(\J_1(\ba),\J_2(\ba)\right)$ on a non-convex section of PF.
Then there exist controls $\ba_1, \ba_2$ and a constant $0 \leq \theta \leq 1$ such that
\begin{align*}
\theta \J_1(\ba_1) + (1-\theta)\J_1(\ba_2) &\le \J_1(\ba) \\
\theta \J_2(\ba_1) + (1-\theta)\J_2(\ba_2) &\le \J_2(\ba)
\end{align*}
with at least one of the inequalities strict.
Since $\F$ is decreasing and convex in both arguments,
\begin{align*}
\F(\J_1(\ba),\J_2(\ba)) &< \F\biggl(\theta \J_1(\ba_1) + (1-\theta)\J_1(\ba_2) \, ,\, \theta \J_2(\ba_1) + (1-\theta)\J_2(\ba_2)\biggr) \\
&\le \theta\F\biggl(\J_1(\ba_1),\J_2(\ba_1) \biggr)+ (1-\theta)\F\biggl(\J_1(\ba_2),\J_2(\ba_2)\biggr) \\
&\le \max_{i=1,2} \left\{\F\bigl(\J_1(\ba_i),\J_2(\ba_i)\bigr)\right\}
\end{align*}
Thus, such $\ba$ cannot be a maximizer in \eqref{eq:multi_objective}.
\end{proof}

\subsection{Scalarized Eikonal}
For any $\lambda \in [0,1]$, we define the scalarized running cost $K^{\lambda}$ as
\begin{equation}\label{eq:scalarized_cost}
K^{\lambda}(\x) = \lambda \psi(\x) + (1-\lambda)K(\x).
\end{equation}
A standard optimal control argument \cite{bardicapuzzodolcetta} shows that
the value function
\begin{equation}\label{eq:scalarized_value}
u^{\lambda}(\x) = \inf_{\adot \in \A} \biggl\{ \lambda \J_1(\x,\adot) + (1-\lambda) \J_2(\x,\adot)\biggr\}
\end{equation}
is the unique viscosity solution \cite{crandall1983viscosity} of the scalarized Eikonal equation:
\begin{align}\label{eq:scalarized_eikonal}
\begin{split}
f(\x) \left|\nabla u^{\lambda}(\x)\right| = K^{\lambda}(\x), \qquad &\x \in \domain \\
u^{\lambda}(\x) = 0, \qquad &\x \in \boundary.
\end{split}\end{align}

For every specific $\lambda$, we can recover the corresponding $\lambda$-optimal paths by gradient descent in $u^{\lambda}.$
We note that taking $\lambda = 0$ corresponds to detection-indifferent optimization, yielding the PDE \eqref{eq:Eikonal}, which we already needed to solve to find $R$ along with the optimal pre-extraction path from $\boundary$ to each site $\x_0$.  The case $\lambda = 1$ yields the PDE for recovering the time-indifferent least-detectable paths. 

\begin{remark}
\label{rem:P_lower_bound}
We note that a proper choice of $\lambda$ can also represent the PDE model based on the ``$\Psi \approx \J_1$'' approximation discussed in Remark \ref{rem:linearize}.
In particular, for $\lambda_{\sharp} = B(\x_0) / \left( B(\x_0) + 1 \right),$
the equation \eqref{eq:scalarized_value} can be rewritten as
$$\left( B(\x_0) + 1 \right) f(\x) \left|\nabla u^{\lambda_{\sharp}}(\x) \right| \, = \, B(\x_0) \psi(\x) + K(\x).$$
As a result, $\left( B(\x_0) + 1 \right) u^{\lambda_{\sharp}}(\x)$  is the viscosity solution of  \eqref{eq:linearizingEikonal}.  
When $\J_1 \ll 1,$ this yields 
``nearly optimal'' post-extraction paths.
Based on \eqref{eq:linearizedPaApprox}, we also know that 
\begin{equation}
\label{eq:linearized_P_estimate}
\PA(\x_0) \; \geq \;
P^{}_{\sharp}(\x_0) \, = \, 
B(\x_0) \, - \, \left( B(\x_0) + 1 \right) u^{\lambda_{\sharp}}(\x_0) \, - \, R(\x_0).
\end{equation}
\end{remark}

\subsection{Auxiliary PDEs}
However, we are ultimately interested in the expected payoff \eqref{eq:multi_objective}, not the value function $u^{\lambda}(\x)$.
Under suitable regularity assumptions on the speed $f$ and running cost $K$, it can be shown that an optimal control $\adot$ exists for every $\x \in \domain$, and the infimum in formula \eqref{eq:scalarized_value} can be replaced with a minimum \cite{bardicapuzzodolcetta}.
We define the set of $\lambda$-optimal controls $\A^{\lambda}_{\x}$ starting from $\x \in \domain$ as:
\begin{equation}\label{eq:lambda_optimal}
\A^{\lambda}_{\x} = \argmin_{\adot \in \A} \biggl\{ \lambda \J_1(\x,\adot) + (1-\lambda) \J_2(\x,\adot)\biggr\}
\end{equation}
We note that the optimal control $\adot$ will be unique at every point $\x$ where $u^{\lambda}(\x)$ is differentiable. 
Given our conditions on $f$ and $K$, $u^{\lambda}(\x)$ is Lipschitz-continuous, and so $\A^{\lambda}(\x)$ will be a singleton for almost every $\x \in \domain$.

We will now define the $\lambda$-optimal-restricted value functions $v^{\lambda}_1$ and $v^{\lambda}_2$ as:
\begin{align}
v^{\lambda}_1(\x)  &= \inf_{\adot \in \A^{\lambda}_{\x}} \bigl\{\J_1(\x,\adot)\bigr\} \\
v^{\lambda}_2(\x)  &= \inf_{\adot \in \A^{\lambda}_{\x}} \bigl\{\J_2(\x,\adot)\bigr\}
\end{align}
Given the solution $u^{\lambda}(\x)$ of \eqref{eq:scalarized_eikonal}, we can recover $v^{\lambda}_1$ and $v^{\lambda}_2$ by solving the following system of auxiliary linear PDEs 
introduced in \cite{mitchell2003continuous}:
\begin{align}\label{eq:auxiliary_system}\begin{split}
\nabla v^{\lambda}_1(\x) \cdot \nabla u^{\lambda}(\x) &= \frac{\psi(\x)K^{\lambda}(\x)}{f(\x)^2} \\
\nabla v^{\lambda}_2(\x) \cdot \nabla u^{\lambda}(\x) &= \frac{K(\x)K^{\lambda}(\x)}{f(\x)^2} 
\end{split}\end{align}
with $v^{\lambda}_1 = v^{\lambda}_2 = 0$ on $\boundary$.

Since we have shown in Section \ref{ss:PF} that the global optimizer of $P$ lies on the convex part of the $(\J_1,\J_2)$ Pareto front, it can be recovered through scalarization. 
More precisely, the optimal expected payoff satisfies
\begin{equation*}
\PA(\x) = \sup_{\lambda \in [0,1]} \left\{ B(\x)e^{-v^{\lambda}_1(\x)} - v^{\lambda}_2(\x)\right\}
\, - \, R(\x),
\end{equation*}
where the optimal $\lambda$ will be usually $\x$-dependent.

\section{Numerical Methods}
\label{s:Numerics}

Since the domain $\domain$ is meant to represent some protected area such as a national park, it will in general be irregularly shaped.
One way of handling this would be to discretize $\domain$ directly through the use of 
$\boundary$-conforming  triangulated meshes.
Efficient methods for numerically solving equation \eqref{eq:scalarized_eikonal} on such meshes are already known \cite{barth1998numerical, sethian2000fast, qian2007fast}.
For this paper, we will instead use a Cartesian grid $\X$ to discretize a rectangular region in $\R^2$ containing $\domain$.
Without loss of generality, suppose that $\domain$ is contained within a rectangle $\left[0, X_{max}\right] \times \left[0, Y_{max}\right]$.
Our grid spacing will be $\Delta x = X_{max}/ N_x$ in the $x$-direction, and $\Delta y = Y_{max}/ N_y$ in the $y$-direction.
The coordinates of our gridpoints will be $x_i = i\Delta x$ for $i=0,1,\dots N_x$ and $y_j = j\Delta y$ for $j=0,1, \dots N_y$. 
For Model A, we will also discretize $\lambda$ as $\lambda_k = k/N_{\lambda}$ for $k = 0,1,\dots N_{\lambda}$.
For Model G,  we will also discretize $\Btilde$ as 
$\Btilde_m = \Bmin + m (\Bmax - \Bmin)/N_{\Btilde},$ with $\Bmin = \min_{\x} B(\x),$
$\Bmax = \max_{\x} B(\x),$ and $m = 0,1,\dots N_{\Btilde}$.

\subsection{Scalarized Eikonal Equation (Model A)}
We will approximate our value function with
\begin{equation*}
U^k_{i,j} \approx u^{\lambda_k}(x_i,y_j)
\end{equation*}
and our $\lambda$-optimal-restricted value functions with
$$
V^{k,1}_{i,j} \; \approx \; v^{\lambda_k}_1(x_i,y_j), \qquad
V^{k,2}_{i,j} \; \approx \; v^{\lambda_k}_2(x_i,y_j).
$$

For each $\lambda_k$ value, we will need to solve the scalarized Eikonal equation \eqref{eq:scalarized_eikonal} and the coupled system of auxiliary linear PDEs \eqref{eq:auxiliary_system} simultaneously.
To approximate the gradients of the value functions, we will make heavy use of the following first-order difference operators:
\begin{align*}
D^{+x}_{i,j}[W] &= \frac{W_{i+1,j}-W_{i,j}}{\Delta x}, \qquad &D^{-x}_{i,j}[W] &= \frac{W_{i,j} - W_{i-1,j}}{\Delta x}, \\
D^{+y}_{i,j}[W] &= \frac{W_{i,j+1}-W_{i,j}}{\Delta y}, \qquad &D^{-y}_{i,j}[W] &= \frac{W_{i,j} - W_{i,j-1}}{\Delta y}.
\end{align*}
where $W$ could represent any of the value function approximations $U^k$, $V^{k,1}$, or $V^{k,2}$.

Since the scalarized Eikonal equation \eqref{eq:scalarized_eikonal} and the auxiliary PDEs \eqref{eq:auxiliary_system} are solved along the same characteristics, we will make sure that our approximations of the gradients are always using the same stencils.
The upwind difference operator $\D^x_{ij}$ in the $x$-direction will be:
\begin{align*}
\D^x_{i,j}[W] = 
\begin{cases}
D^{+x}_{i,j}[W], &\text{if} \quad  D^{+x}_{i,j}[U^k] \le \min\left( -D^{-x}_{i,j}[U^k], 0\right); \\
D^{-x}_{i,j}[W], &\text{if} \quad  -D^{-x}_{i,j}[U^k] < \min\left( D^{+x}_{i,j}[U^k], 0\right); \\
0, &\text{otherwise};
\end{cases}
\end{align*}
with $\D^y_{i,j}$ defined accordingly for the $y$-direction.
Note that the stencil being used will always be determined by $U^k$ regardless of whether $\D$ is being applied to $U^k$, $V^{k,1}$, or $V^{k,2}$.

Using these first-order difference operators, we can write down the first-order upwind discretization of our system of PDEs as:
\begin{align}\label{eq:discretized_system}\begin{split}
&f_{i,j} \left[ \left(\D^x_{i,j}[U^k]\right)^2 + \left(\D^y_{i,j}[U^k]\right)^2\right]^{1/2} =K^{\lambda_k}_{i,j} \\
& \D^x_{i,j}[U^k] \cdot \D^x_{i,j}[V^{k,1}] + \D^y_{i,j}[U^k] \cdot \D^y_{i,j}[V^{k,1}] = \frac{\psi_{i,j}K^{\lambda_k}_{i,j}}{f_{i,j}^2} \\ 
& \D^x_{i,j}[U^k] \cdot \D^x_{i,j}[V^{k,2}] + \D^y_{i,j}[U^k] \cdot \D^y_{i,j}[V^{k,2}] = \frac{K_{i,j}K^{\lambda_k}_{i,j}}{f_{i,j}^2}
\end{split}\end{align}
with the boundary conditions
\begin{equation*}
U^k_{i,j} = V^{k,1}_{i,j} = V^{k,2}_{i,j} = 0, \qquad \text{for} \; (x_i,y_j) \not\in\domain. \\
\end{equation*}

Since this is a monotone and consistent finite difference approximation of \eqref{eq:scalarized_eikonal}, $U^k$ will converge under grid refinement to the viscosity solution $u^{\lambda_k}$ \cite{barles1991convergence}.
But on any fixed grid, finding $U^k$ values is not straightforward since the above discretized system is coupled and nonlinear.  On a grid with $M$ gridpoints, straightforward Gauss-Jacobi iterations result in a high computational cost of $O(M^2),$ and Gauss-Seidel iterations with any fixed ordering of gridpoints are similarly inefficient. 

A significant speed up is gained by exploiting the causal properties of upwind difference operators. Based on the definition of  $\D^x$ and $\D^y,$ each $U^k_{i,j}$ depends only on the {\em smaller} neighboring $U^k$ values.  This {\em monotone causality} allows for the use of non-iterative Dijkstra-like ``Fast Marching'' methods
\cite{tsitsiklis1995efficient, sethian1996fast, sethian1999fast},
which compute $U^k$ in $O(M \log M)$ operations.  They mimic the logic of a classical shortest path algorithm on graphs \cite{dijkstra1959} and effectively de-couple this discretized system by determining the correct ordering of gridpoints/equations on the fly.     

Another popular approach for efficient Eikonal solvers is to use Gauss-Seidel iterations, but alternating through a list of direction-aligned grid orderings (from SW, SE, NE, and NW). 
Such ``Fast Sweeping'' techniques \cite{Boue98markovchain, zhao2005fast, tsai2003fast} aim to guess the direction of characteristics
for a part of the domain, recovering many correct $U^k$ values in each iteration.  These methods converge in $O(\omega M)$ operations, but the factor $\omega$ is not known a priori
and depends on the number of times the characteristics change direction from quadrant to quadrant.  Both classes of methods have their preferred sets of problems. E.g., Fast Sweeping is clearly better when characteristics are largely straight lines, while Fast Marching is typically preferable on domains with complex geometry or with rapidly changing PDE coefficients.
More recent hybrid (two-scale) methods aim to combine the best features of Marching and Sweeping.  We refer to \cite{ChacVlad1, ChacVlad2} for a detailed description, benchmarking, and comparison with other sequential and parallel Eikonal solvers.

While our own implementation is based on Fast Marching, other methods mentioned above can be similarly adapted to the current application.  
(Since the directions of derivative approximations for $V^{k,1}$  and $V^{k,2}$ are determined by $U^k$ values, 
these two grid functions are also easily computed simultaneously with $U^k$.)
Which method would be more efficient will depend on the geometry of $\domain$ and the properties of $\psi, K,$ and $f$.

\begin{remark}
\label{rem:lambdas}
The above grid functions are needed to predict the decisions of a rational extractor. 
For any single site $\x = (x_i, y_j),$  whether it will be extracted 
under Model A 
can be answered as soon as we find any $k$ such that  
$$
B_{i,j}e^{-V^{k,1}_{i,j}} \, - \, V^{k,2}_{i,j} \, - \, R_{i,j} \; > \; \tilde{p}.
$$
The exact $\PA(x_i,y_j)$ could be computed by any continuous optimization in $\lambda$.
But to approximate $\PA$ on the entire spatial grid $\X,$ it is more efficient to
compute $V^{k,1}$ and $V^{k,2}$ for all $k = 0,1,\dots N_{\lambda}$ and then use a 
$\lambda$-grid search:
\begin{equation}
\label{eq:lambda_discrete}
\PA_{i,j} \; \approx \; \max_{k} \left\{ B_{i,j} e^{-V^{k,1}_{i,j}} - V^{k,2}_{i,j}\right\}
\, - \, R_{i,j}.
\end{equation}
\end{remark}

\subsection{Randomly-terminated Eikonal equation (Model G)}

We will define $\Ubar$ as a grid function approximating the value function $\ubar$ of section \ref{ss:post-extraction_G}.
The upwind difference operators can be similarly used to  approximate the gradient
and then used to obtain a consistent, monotone discretization of PDE \eqref{eq:ubar} :
\begin{equation}
\label{eq:discretized_system_G}
\begin{split}
\left[  \min\left( D^ {+x}_{i,j}[\Ubar^m], \, -D^{-x}_{i,j}[\Ubar^m], \, 0 \right)  \right]^2 \, &+ \, \left[  \min\left( D^ {+y}_{i,j}[\Ubar^m], \, -D^{-y}_{i,j}[\Ubar^m], \, 0 \right)    \right]^2\\ 
\, & = \, 
\left[ K_{i,j} \, + \, \psi_{ij}  \left( b_m + R_{i,j} - \Ubar^m_{i,j} \right) \right]^2 \, / \, f_{i,j}^2,
\end{split}
\end{equation}
with the boundary conditions $\Ubar^m_{i,j} = 0$ for $\, (x_i,y_j) \not\in\domain.$

This is essentially the same discretization previously used in \cite{AndrewsVlad}, where it was shown that its causal properties allow for a non-iterative Dijkstra-like  algorithm with $O(M \log M)$ complexity. We refer readers to  \cite[Appendix~B]{AndrewsVlad} for a quadrant-by-quadrant interpretations of \eqref{eq:discretized_system_G} and a detailed discussion of efficient implementation.  
This approach is arguably even better suited in the current context:
the ``free boundary'' present in \cite{AndrewsVlad} made it necessary to use a special initialization, but our Dirichlet boundary conditions make it possible to use the original Fast Marching. 

\begin{remark}
\label{rem:Btilde_values}
For any single site $\x = (x_i, y_j),$
the expected profit under Model G could be computed by using the exact $B_{i,j}$ instead of $b_m$
in equation \eqref{eq:discretized_system_G}.
But to approximate $\PG$ on the entire spatial grid $\X,$ it is more efficient to
compute $\Ubar^m$ for all $m = 0,1,\dots N_{b}.$ 
If $m$ is such that $B_{i,j} \in (b_m, b_{m+1}],$
we can define 
$$ P^{g-}_{i,j} = B_{i,j}  - \Ubar_{i,j}^m - R_{i,j} \quad \text{ and } \quad
P^{g+}_{i,j} = B_{i,j}  - \Ubar_{i,j}^{m+1} - R_{i,j}.
$$
Either one of these can be used as a good approximation since $\PG_{i,j} \in \left[ P^{g-}_{i,j}, \, P^{g+}_{i,j} \right]$ and 
$P^{g+}_{i,j} - P^{g-}_{i,j} \leq (\Bmax-\Bmin) / N_{b}.$  
We know that $(x_i, y_j)$ will be extracted if 
$P^{g-}_{i,j} > \tilde{p},$  and will not to be extracted if $P^{g+}_{i,j} \le \tilde{p}.$
In our numerical tests we use the $\tilde{p}$ level set of $\frac{1}{2} \left(P^{g-}+P^{g+} \right)$
to approximate $\pBoundaryG.$  A more conservative discrete approximation of that boundary could be also obtained
from the set
$\left\{ (x_i, y_j) \, \mid  \,   P^{g-}_{i,j}  \le \tilde{p} \le P^{g+}_{i,j} \right\}.$
\end{remark}

\section{Examples}
\label{s:Examples}

In all of our numerical examples, we use a homogeneous running cost $K(\x) = \kappa = 1.$
As a result, the pre-extraction cost $R(\x)$ coincides with $\tau(\x)$, the minimum time required starting from $\boundary$ to reach the extraction point $\x$.  
Similarly, $\J_2(\x,\adot)$ represents the 
time to  $\boundary$ starting from $\x$ and using the post-extraction control $\adot$. 
In Examples 1-5 the perpetrators can move with speed $f=1$ (and thus, $\tau(\x) = d(\x)$), while in Examples 6 and 7, their speed is terrain dependent. 

To save space,  
we primarily focus on the expected profit $\PA$ corresponding to Model A (the aerial patrols). Our Figure captions report the maximum attainable expected profit 
$\PbarA = \max_{\x \in \domain} \PA(\x),$ the proportion of protected area $\ApA = | \pDomainA | / |\domain|,$ 
and the proportion of protected value $\VpA = \int_{\pDomainA} B(\x) d\x \, / \, \int_{\domain} B(\x) d\x.$ 
Implementing the same patrol density $\psi$ in both 
Model A and Model G (the ground patrols) might be hard in practice, but for the sake of quantitative comparison, we assume that this can be done. 
Since in Model G the costs resulting from a detection are typically lower, this results in $\PG \geq \PA$ and $\pDomainG \subset \pDomainA.$
We demonstrate this in Examples 5-7,
where we compare $\pBoundaryA$ with  $\pBoundaryG$  (using $N_{\lambda} = N_{\Btilde}$)  and provide the related statistics.

In most cases we consider $\psi$ to be a part of the problem statement, but in Examples 3 and 4 we also perform a straightforward optimization of $\psi$ to maximize $\ApA$ while respecting the specified patrol density budget $E$, using \eqref{eq:patrol_budget} with $\gamma = 1$.

Our numerical approach is quite efficient.  Even though the implementation is not highly optimized, it was easy to perform all simulations on a mid-range laptop computer (MacBook Pro with a 2GHz Intel Core i5 processor and 8GB of RAM).
For a fixed $\psi$ and any specific $\lambda,$ the execution times varied from 120 milliseconds on smaller grids (Examples 3-4) to 50 seconds on larger grids (Examples 6-7).  The overall time depended on the desired $\lambda$-resolution. The source code for our implementation will be made available at \href{https://github.com/eikonal-equation/Environmental-Crime}{\url{https://github.com/eikonal-equation/Environmental-Crime}}.

\subsection{Example 1: a circle}
We start with a radially symmetric example close to the original setting of Albers  \cite{albers2010spatial}.
The domain $\domain$ is a disk of diameter 1, and we use a constant benefit function $B=2.$
But we assume a more general pointwise detection rate based on the distance $d$ from the boundary:
\begin{equation}
\label{eq:banded_psi}
\psi(d) = \frac{\mu}{50(d-0.3)^2 + 0.5} ,  \qquad d \in [0,\, 0.5],
\end{equation}
where $\mu$ is a constant chosen to enforce the budget constraint $\int_{\domain} \psi(\x) d\x = E$ with $E=2.5.$
A contour plot of $\psi$ can be found in Figure \ref{fig:ex1_psi}.
The expected profit $\PA$ is shown in Figure \ref{fig:ex1_P}, with the dashed line indicating the boundary of the pristine region $\Omega_p$. 
The dotted line indicates the zero level set of $P_{\sharp}$, the approximation to $\PA$ discussed in Remark \ref{rem:P_lower_bound}.

We use a uniform rectangular grid with dimensions $N_x=501$, $N_y = 501$.
Since $f$ is uniformly one, with $\psi$ and $K$ depending only on the distance from the boundary, this example falls into the category of ``$\tau$-banded'' scenarios discussed in Section \ref{s:MultiObjective}.
The optimal trajectories are simply straight lines to the closest point on the boundary.
Since the same trajectory minimizes both $\J_1$ and $\J_2$, the Pareto Front consists of a single point, which can be recovered using any $\lambda$ with a single PDE solve.
\begin{figure}
\begin{center}
\begin{subfigure}[t]{0.32\textwidth}
	\includegraphics[height=0.8\textwidth]{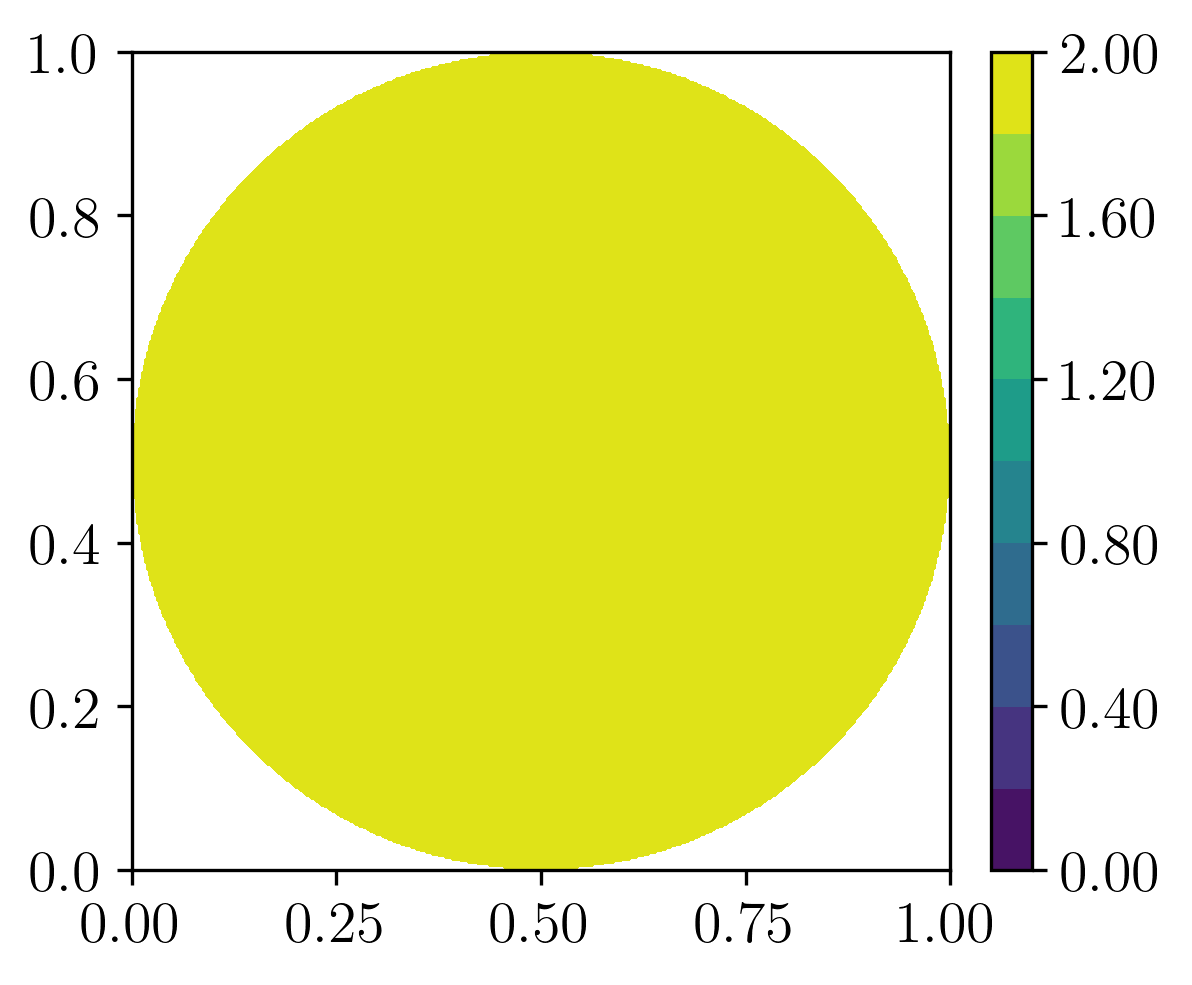}
	\subcaption{Homogeneous benefit function $B(x,y)$}\label{fig:ex1_B}
\end{subfigure}
\begin{subfigure}[t]{0.32\textwidth}
	\includegraphics[height=0.8\textwidth]{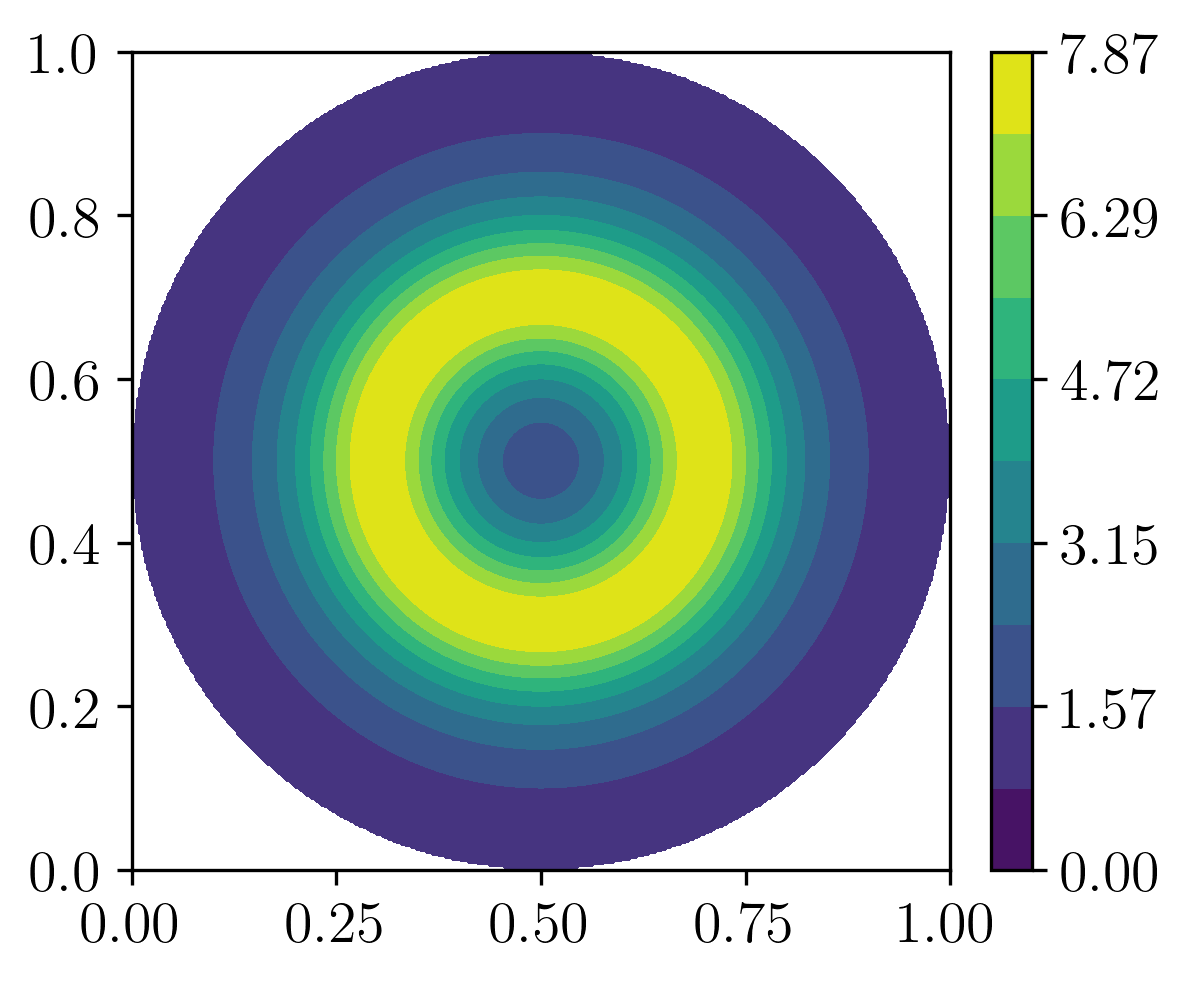}
	\subcaption{\centering Radially symmetric patrol density function $\psi(x,y)$}\label{fig:ex1_psi}
\end{subfigure} 
\begin{subfigure}[t]{0.32\textwidth}
	\includegraphics[height=0.8\textwidth]{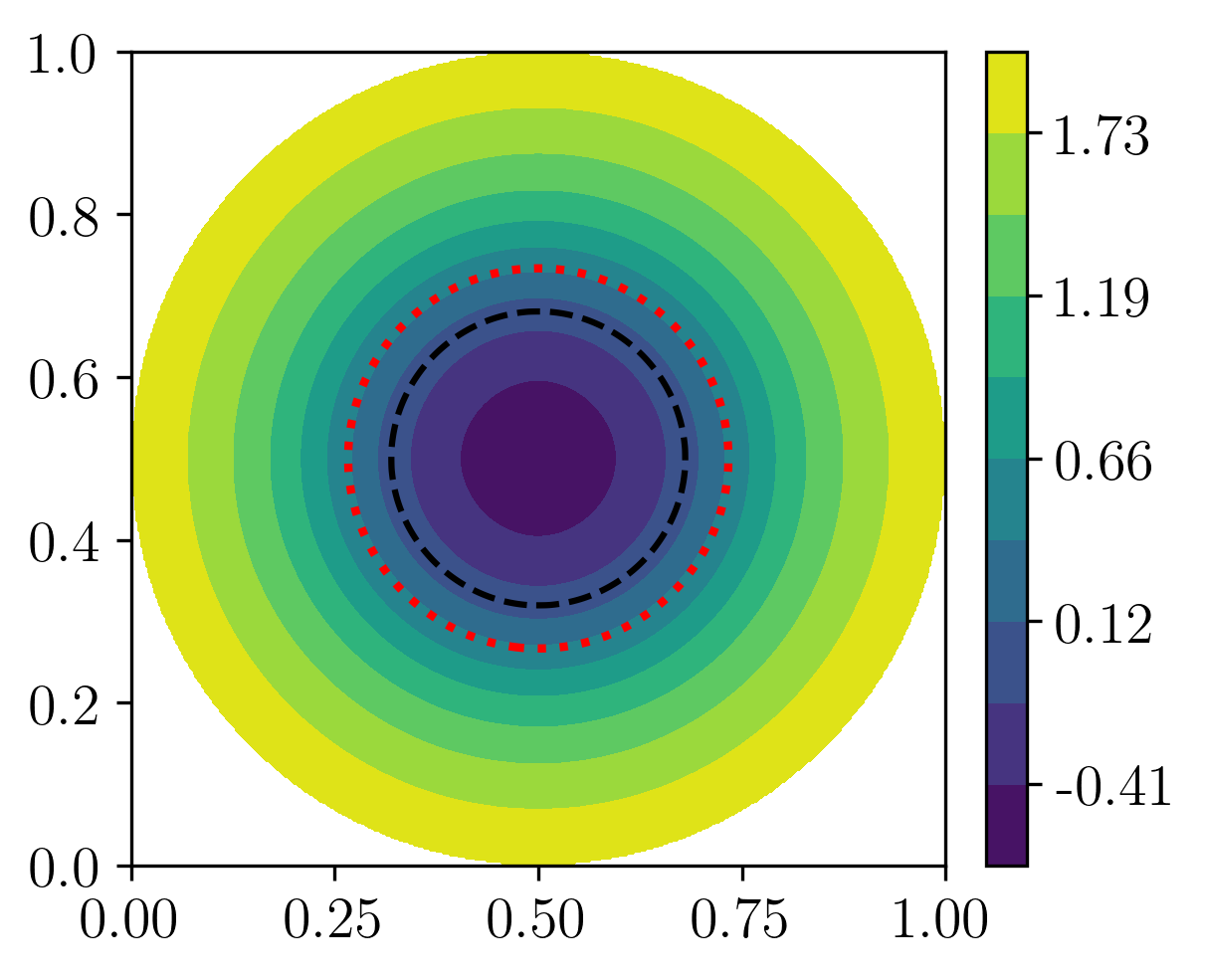}
	\subcaption{Expected profit $\PA(x,y)$ for the extractor}\label{fig:ex1_P}
\end{subfigure}
\caption{A disk-like domain. The maximum profit $\PbarA=2$ is attained at $\boundary$, the pristine area proportion is $\ApA = 13.02\%$ and, since $B(\x)$ is constant,  the same $\VpA = 13.02\%$ of the value is protected. If $P_{\sharp}$ was used instead of $\PA$, it would overestimate the pristine proportion and value protected as $\ApA = \VpA = 21.77\%$.}\label{fig:ex1}
\end{center}
\end{figure}

\subsection{Example 2: banded patrol density on a square}
We use a sum of two Gaussians to specify the benefit function
\begin{equation*}
B(x,y) = e^{-10\left((x-0.25)^2+(y-0.5)^2\right)} + e^{-10\left((x-0.75)^2+(y-0.5)^2\right)}
\end{equation*}
on a unit square $\domain;$ see Figure \ref{fig:ex2_B}.
We assume the same ``banded'' detection rate defined in \eqref{eq:banded_psi}, but
with $\mu$ selected based on $E=2.$
A contour plot of $\psi$ is shown in Figure \ref{fig:ex2_psi}.
The expected profit $\PA$ can be found in Figure \ref{fig:ex2_P}, with the dashed line showing 
$\pBoundaryA$.

This example was computed on a uniform rectangular grid with dimensions $N_x=501$, $N_y = 501$.
As in Example 1, this is another case of a ``$\tau$-banded'' patrol density, so optimal trajectories are straight lines to the closest point on $\boundary$, and any single $u^\lambda$ is sufficient to compute $\PA(\x)$.
(Note that this property holds regardless of a particular benefit function $B.$ It also implies that both Models A and G will yield exactly the same predictions.)
\begin{figure}
\centering
\begin{subfigure}[t]{0.32\textwidth}
	\includegraphics[height=0.8\textwidth]{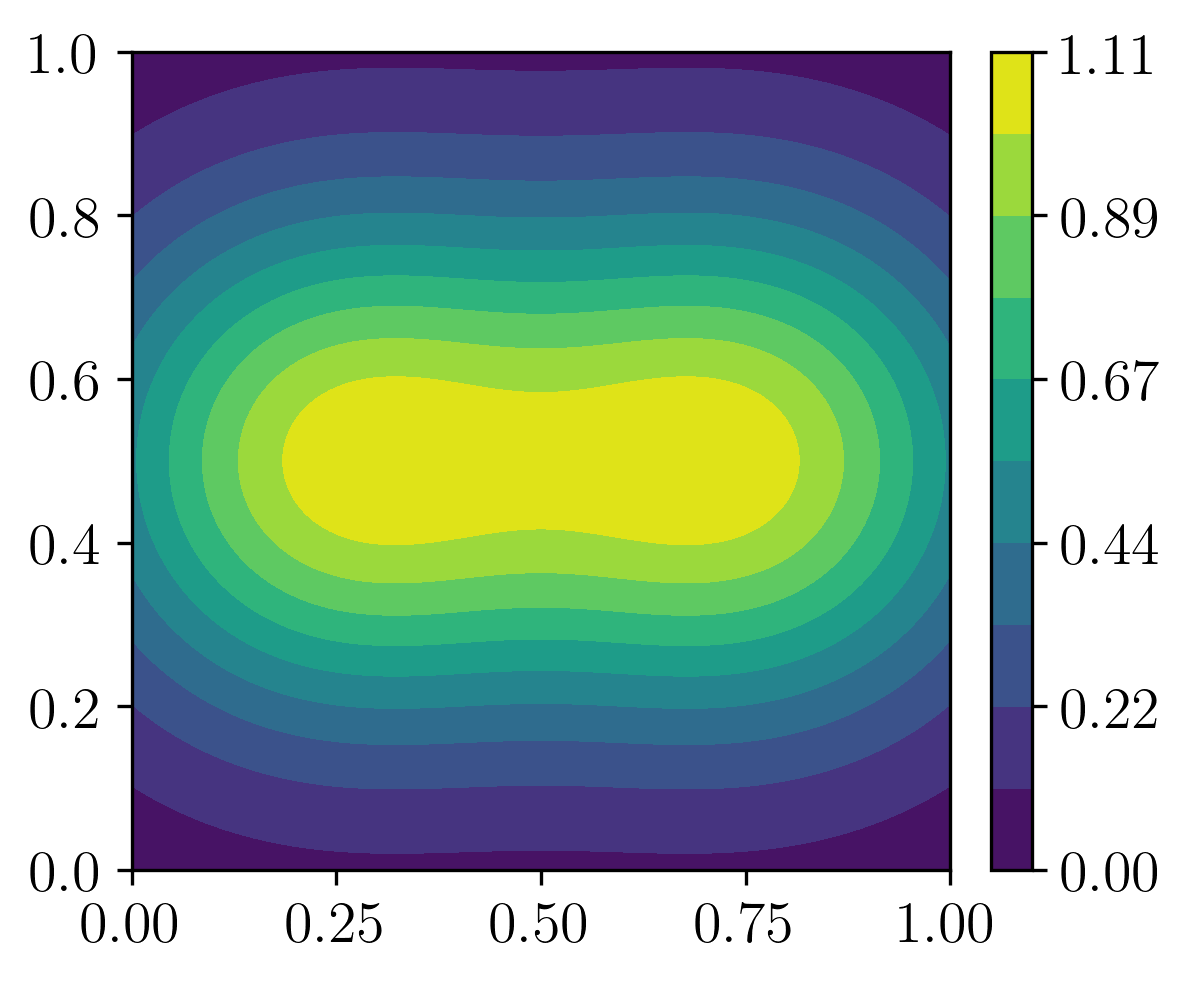}
	\subcaption{\centering Benefit function $B(x,y)$}\label{fig:ex2_B}
\end{subfigure}
\begin{subfigure}[t]{0.32\textwidth}
	\includegraphics[height=0.8\textwidth]{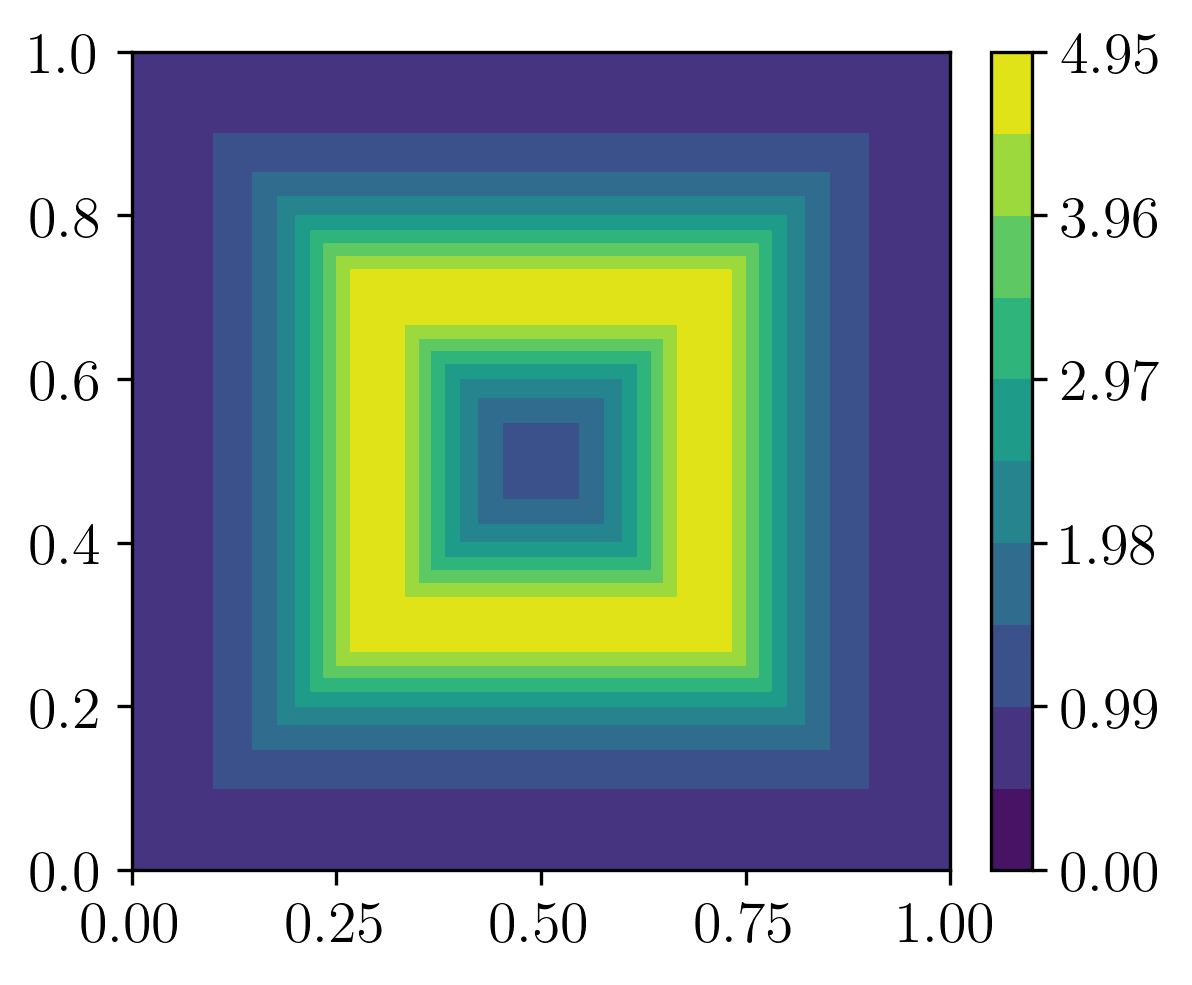}
	\subcaption{\centering Banded 
	$\psi(x,y)$}\label{fig:ex2_psi}
\end{subfigure} 
\begin{subfigure}[t]{0.32\textwidth}
	\includegraphics[height=0.8\textwidth]{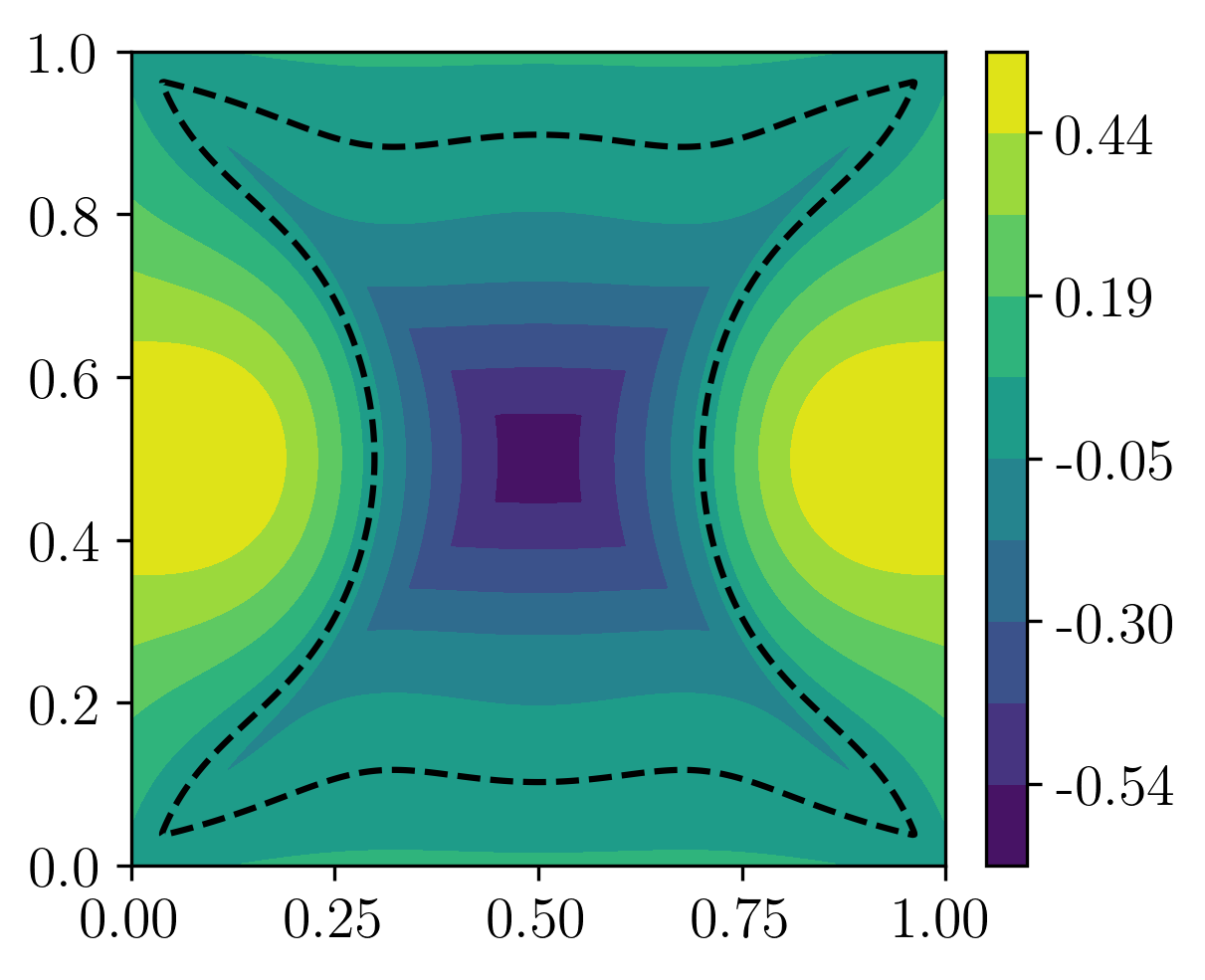}
	\subcaption{\centering Expected profit $\PA(x,y)$}\label{fig:ex2_P}
\end{subfigure}
\caption{Banded patrol density on a square. $\PbarA = 0.56, \, \ApA = 45.67\%,$ and $\VpA=52.99\%.$}\label{fig:ex2}
\end{figure}

\subsection{Example 3: placing a patrol station}
As noted in \cite{albers2010spatial}, in realistic situations PAMs face additional constraints 
and the detection rate typically decreases with the distance to  patrol stations (in our case -- the locations where the surveillance drones are based).
We will suppose that the detection rate $\psi(x,y)$ is now a single Gaussian ``centered'' at 
a patrol station $(\bar{x}, \bar{y}) \in \domain$:
\begin{equation*}
\psi_{\bar{x},\bar{y}}^{} (x,y) = \mu e^{-30\left((x-\bar{x})^2 + (y-\bar{y})^2\right)}
\end{equation*}
with $\mu$ selected based on $E = 2$. 
Using the same domain, speed, and benefit functions defined in Example 2, we will now search for the patrol station location that maximizes the pristine area proportion $\ApA.$
In principle, this could be done with any black-box derivative-free optimization method on $\domain,$ but for the sake of simplicity we have implemented a grid search on a coarse 
($11\times11$) grid.  For each candidate location, the pristine region $\Omega_p$ is computed on a uniform rectangular grid with $N_x=201$, $N_y = 201$, and $N_{\lambda}=101$.

Due to the symmetry, there are actually two $\ApA$-maximizing locations: $(\bar{x}, \bar{y}) = (0.5, 0.3)$ and $(\bar{x}, \bar{y}) = (0.5, 0.7).$  Figure \ref{fig:ex3_psi} shows $\psi$ centered at the former.
The corresponding expected profit $\PA$ can be found in Figure \ref{fig:ex3_P}, with the dashed line showing $\pBoundaryA$.

\begin{figure}
\begin{subfigure}[t]{0.5\textwidth}
	\includegraphics[height=0.7\textwidth]{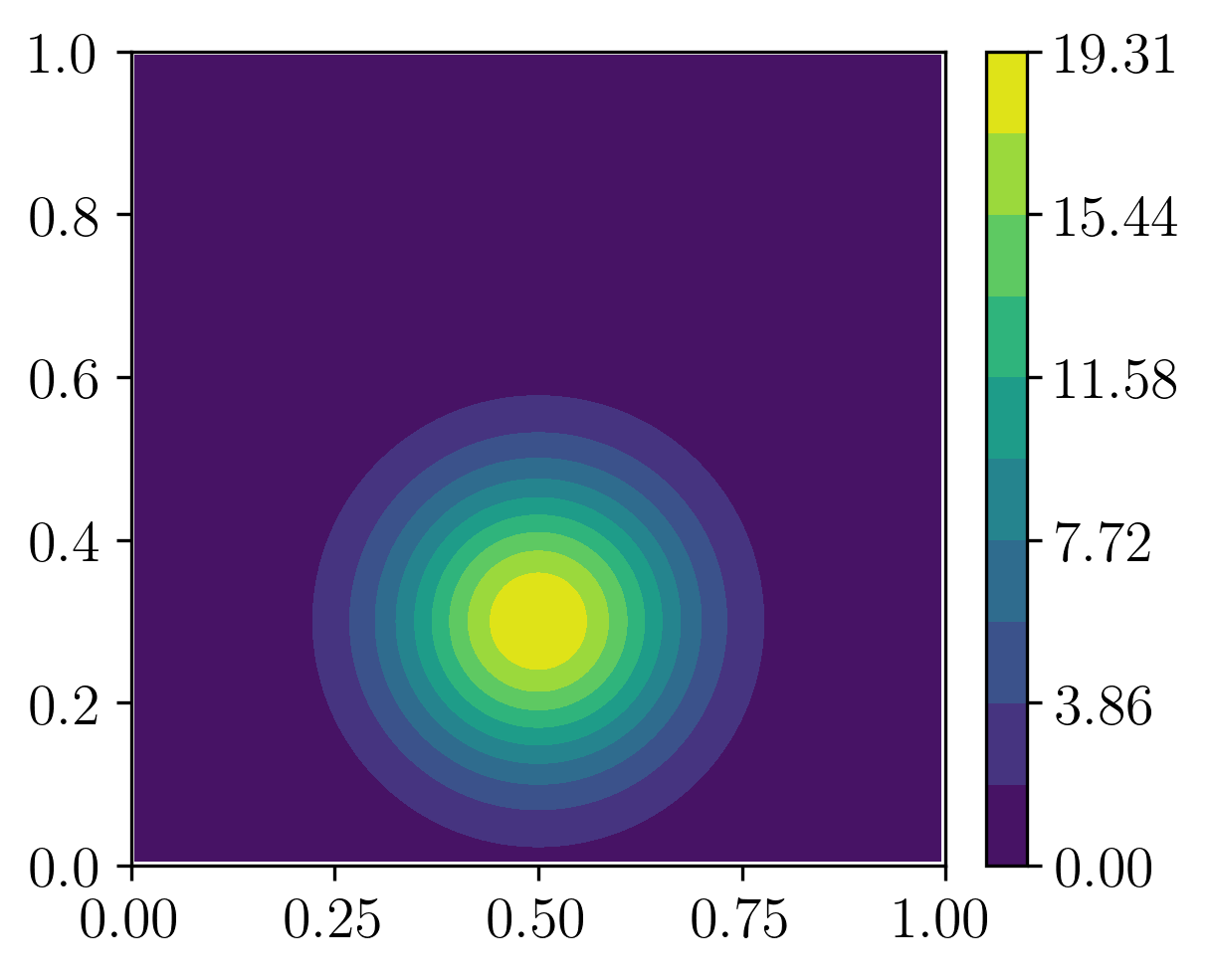}
	\subcaption{\centering $\psi(x,y)$ centered at $(\bar{x}, \bar{y}) = (0.5, 0.3)$}\label{fig:ex3_psi}
\end{subfigure} 
\hspace{-0.15in}
\begin{subfigure}[t]{0.5\textwidth}
	\includegraphics[height=0.7\textwidth]{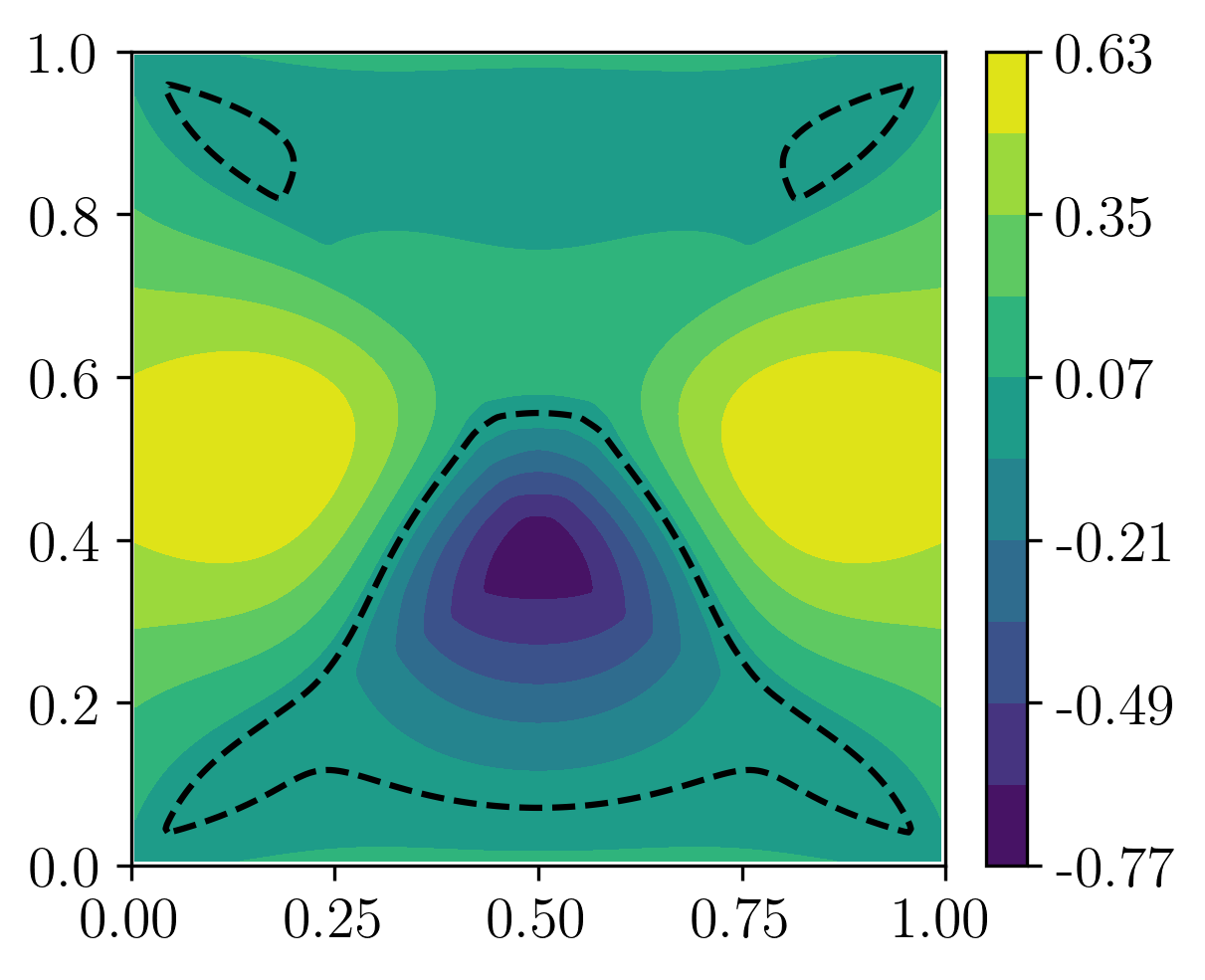}
	\subcaption{\centering Expected profit $\PA(x,y)$}\label{fig:ex3_P}
\end{subfigure}
\caption{Optimal patrol station position. $\PbarA =  0.63, \, \ApA =  23.53\%,$ and $\VpA = 24.54\%$}\label{fig:ex3}
\end{figure}

\subsection{Example 4: allocating surveillance resources}
Given the non-unique maxima $(\bar{x}, \bar{y})$ in Example 3, we now suppose that PAMs decide to build a separate drone station at each of these two locations.   The next question is how to share the surveillance resources 
between them.  Assuming that the overall number of drones is fixed and that the detection rate generated by each station is proportional to the number of drones stationed there, any allocation corresponds to choosing $\textbf{w}=(w_1,w_2)$ with $w_1,w,2 \geq 0, \; w_1+w_2 = 1,$ and defining
\begin{align*}
\psi_{\textbf{w}}(x,y) &= \mu_{\textbf{w}} \left( w_1 G_1(x,y) + w_2 G_2(x,y) \right),\\
G_1(x,y) &= e^{-30((x-0.5)^2+(y-0.3)^2)}, \\
G_2(x,y) &= e^{-30((x-0.5)^2+(y-0.7)^2)},
\end{align*}
with $\mu_{\textbf{w}}$ chosen to ensure $\int_{\domain} \psi_{\textbf{w}}(x,y) \, dx \, dy = E = 2 $. 
For each $\textbf{w},$ we compute the $\ApA$ on a uniform rectangular grid with  $N_x=201$, $N_y = 201$, and $N_{\lambda}=101$. 
A grid search over 101 values of $w_1$ is used to find the $\ApA$-maximizing allocation. 
As in the previous example, the answer is not unique: both $\textbf{w}=(0.43, 0.57)$ and $\textbf{w}=(0.57, 0.43)$ are optimal.

For the first of these, Figure \ref{fig:ex4_psi} shows a contour plot of $\psi_{\textbf{w}}$ along with four different Pareto-optimal paths starting from an extraction point $\x_0 = (0.555, 0.315)$.
The path maximizing the expected profit $\PA$ is shown in magenta, the path minimizing the probability of detection $\Psi$ is shown in green, and the path minimizing the time of travel is shown in black.
The red path corresponds to the ``nearly optimal''  $\lambda_{\sharp}= \frac{B}{B+1}$ of the linearized-$\Psi$ problem, as discussed in Remark \ref{rem:P_lower_bound}.
The corresponding expected profit $\PA$ can be found in Figure \ref{fig:ex4_P}, with the dashed line showing $\pBoundaryA$.
\begin{figure}
\begin{center}
\begin{subfigure}[t]{0.5\textwidth}
	\includegraphics[height=0.7\textwidth]{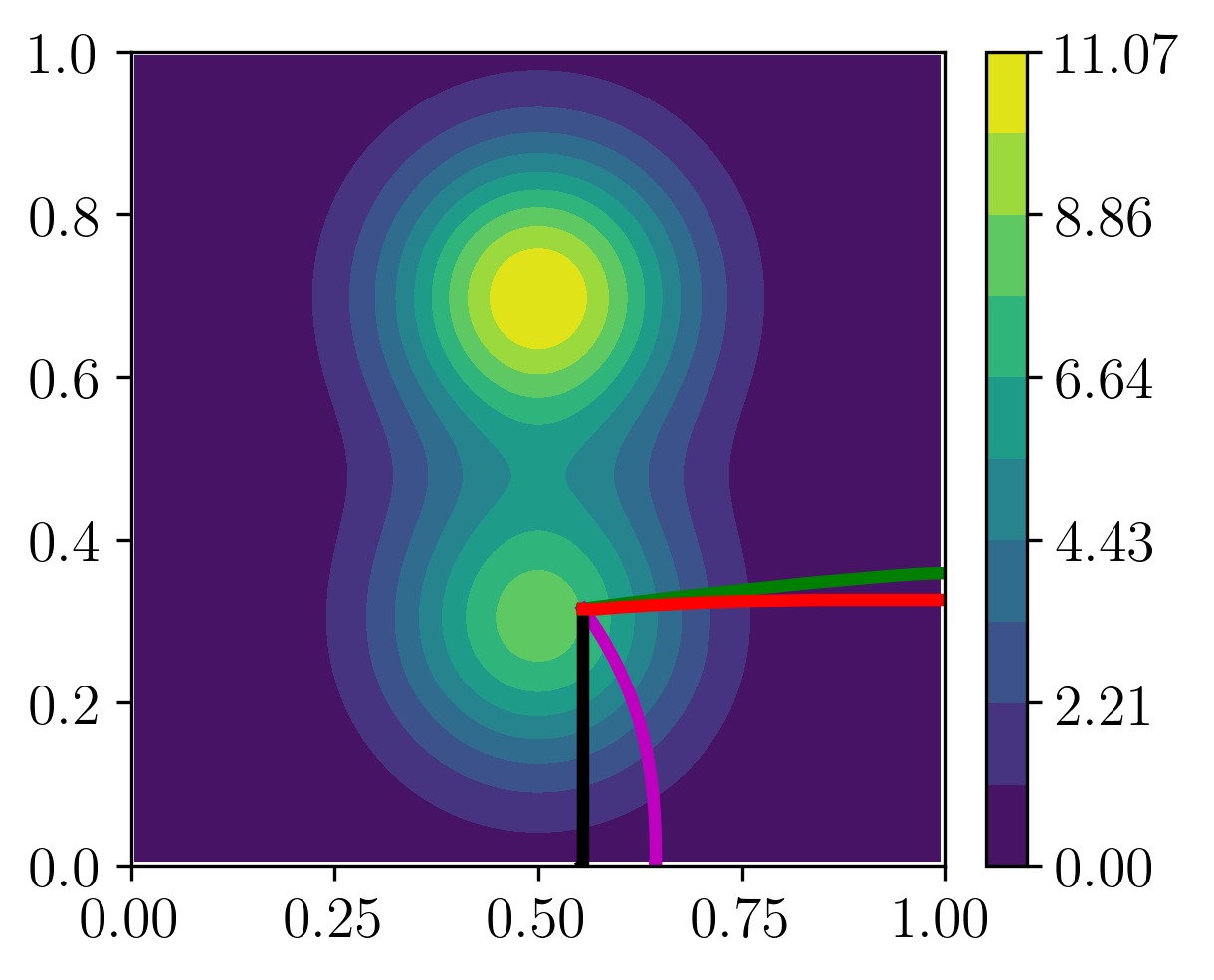}
	\subcaption{\centering Patrol density function $\psi(x,y)$ with the optimal weights $\textbf{w}=(0.43, 0.57)$ }\label{fig:ex4_psi}
\end{subfigure} 
\hspace{-0.15in}
\begin{subfigure}[t]{0.5\textwidth}
	\includegraphics[height=0.7\textwidth]{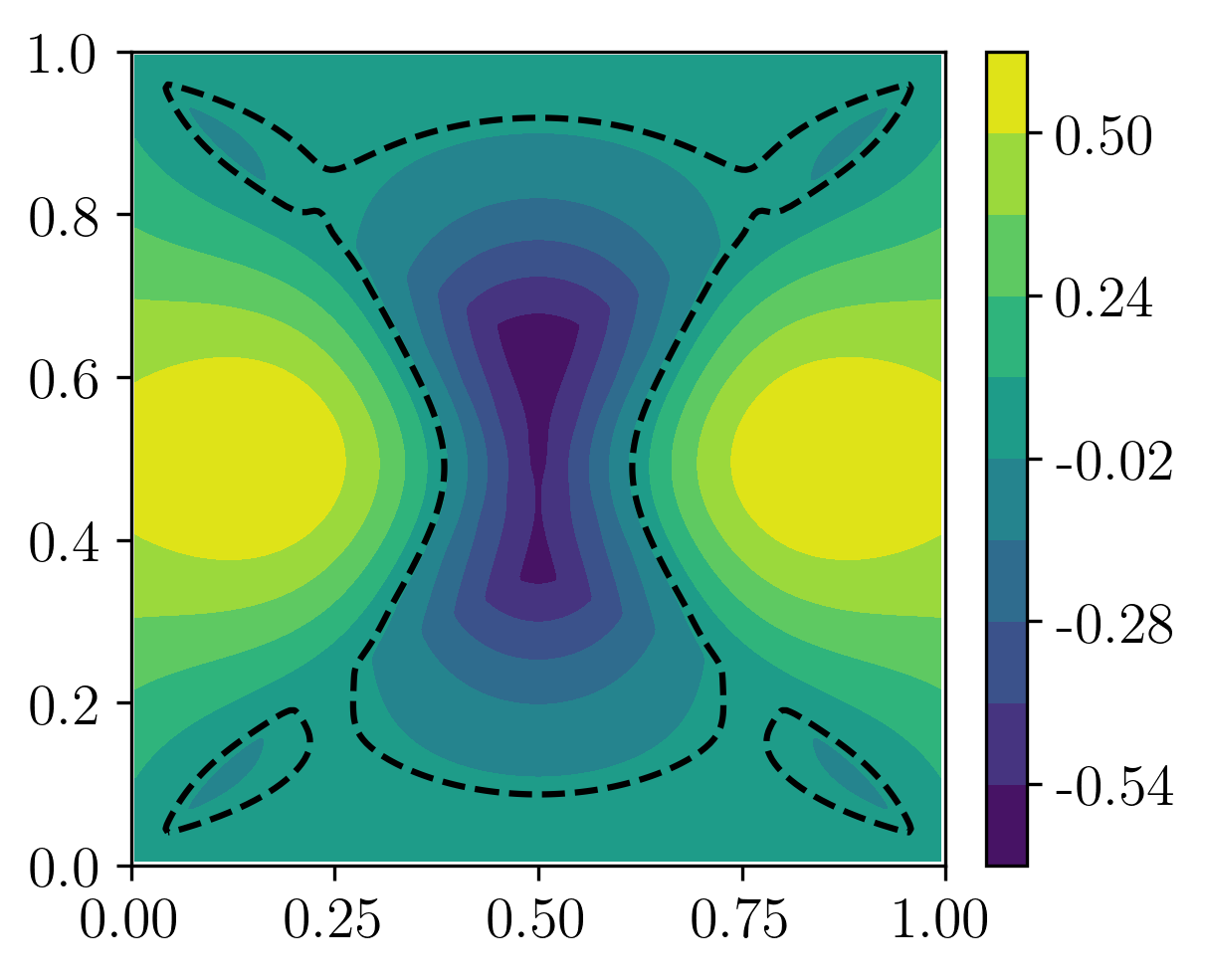}
	\subcaption{\centering Expected profit $\PA(x,y)$ for the extractor}\label{fig:ex4_P}
\end{subfigure}
\caption{Allocating surveillance resources. $\PbarA =  0.63$, $\ApA$ has increased to 35.26\%, and $\VpA$ has increased to 38.48\%. We note that from the PAMs' perspective, the banded patrol from Example 2 still performs better while using the same budget ($E=2$).}\label{fig:ex4}
\end{center}
\end{figure}

In Figure \ref{fig:lambda}, we consider the $\lambda$-optimization problem \eqref{eq:lambda_discrete} for the same extraction point $\x_0$ in detail.
Figure \ref{fig:lambda_optimization} shows the $\lambda_k$-optimal payoffs $B(x,y)e^{-V^{k,1}_{i,j}} - V^{k,2}_{i,j} - R(x,y)$ in blue, with the maximum profit $\PA(x,y)$) as a black dashed line.
The choice of path based on $\lambda_{\sharp}$ is clearly far from optimal, which is not surprising since $\mathcal{J}_1$ is not small.  
 The linearized-$\Psi$ approach significantly overestimates the probability of detection along all trajectories.  In fact, if we were to base the estimated ``profit'' on $u^{\lambda_{\sharp}},$
the formula \eqref{eq:linearized_P_estimate} would yield an even more pessimistic $P^{}_{\sharp}(\x_0) = - 0.699.$

Figure \ref{fig:PF} shows the $(\J_1,\J_2)$ Pareto Front for the same $\x_0.$  
The color of points in in \ref{fig:lambda_optimization} and \ref{fig:PF} matches  the color of the corresponding paths in \ref{fig:ex4_psi}.
\begin{figure}
\begin{center}
\begin{subfigure}[t]{0.48\textwidth}
	\includegraphics[height=0.7\textwidth]{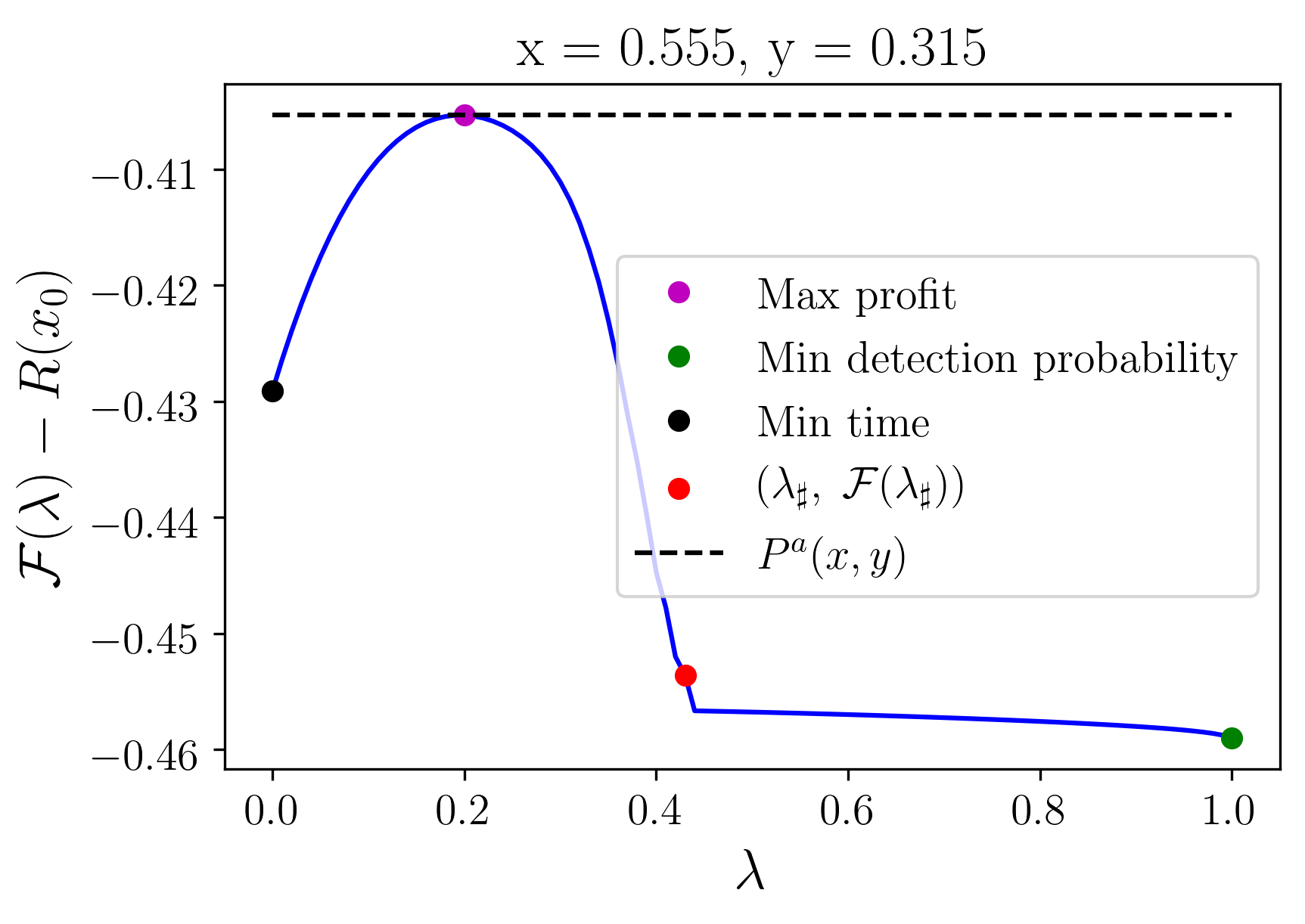}
	\subcaption{\centering $\lambda$-optimization problem}\label{fig:lambda_optimization}
\end{subfigure} 
\begin{subfigure}[t]{0.48\textwidth}
	\includegraphics[height=0.7\textwidth]{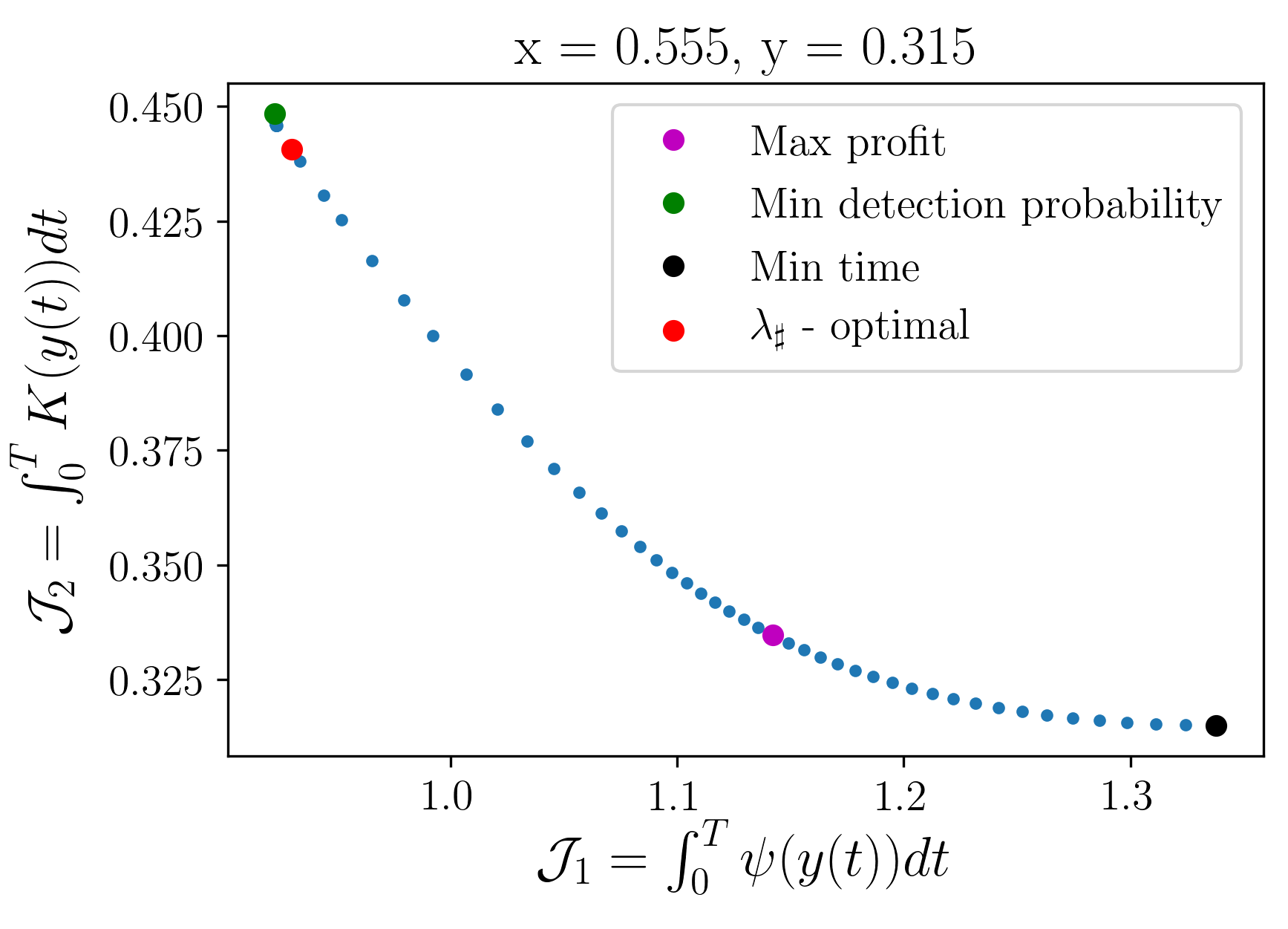}
	\subcaption{\centering $\left(\J_1,\J_2\right)$ Pareto Front}\label{fig:PF}
\end{subfigure} 
\caption{Lambda Optimization and Pareto Front at $\x_0 = (0.555, 0.315)$ in Example 4.}
\label{fig:lambda}
\end{center}
\end{figure}

\subsection{Example 5: exploiting patrol gaps}
The main purpose of this example is to highlight the differences in post-extraction trajectories
between Models A and G.
The domain $\Omega$ is again the unit square $[0,1] \times [0,1]$, with a uniform speed of $f=1$. 
The benefit function $B$ is
\begin{equation*}
B(x,y) = 3 + 7.5\exp\left[-10\left((x-0.7)^2 + (y-0.5)^2\right)\right]
\end{equation*}
and can be seen in Figure \ref{fig:ex5_B}.
The patrol density $\psi(x,y)$ is a sum of eight Gaussians with different means and standard deviations and can be seen in Figure \ref{fig:ex5_psi}.

In Figure \ref{fig:ex5_PA} we see the expected profit $\PA$ from Model A computed on a uniform rectangular grid with $N_x = N_y = 501$, and $N_{\lambda}=401$. 
The optimal post-extraction trajectory corresponding to 
$\hat{\x} = (0.824, \, 0.58)$ is shown in magenta, and $\pBoundaryA$ is shown in orange.

In Figure \ref{fig:ex5_PG} we see the expected profit $\PG$ from Model G computed on the same grid, and $\pBoundaryG$ is shown in orange.
While $\PG$ is only slightly larger than $\PA$, a significant difference can be seen in the post-extraction trajectories.
The solid magenta line represents the optimal post-extraction (pre-detection) trajectory starting from $\hat{\x}.$
The dotted magenta lines show possible post-detection trajectories from three different positions.
Since $f=1$, every post-detection trajectory is just a straight line to the closest point on $\boundary$.

\begin{figure}
\begin{center}
\begin{subfigure}[t]{0.52\textwidth}
	\includegraphics[height=0.65\textwidth]{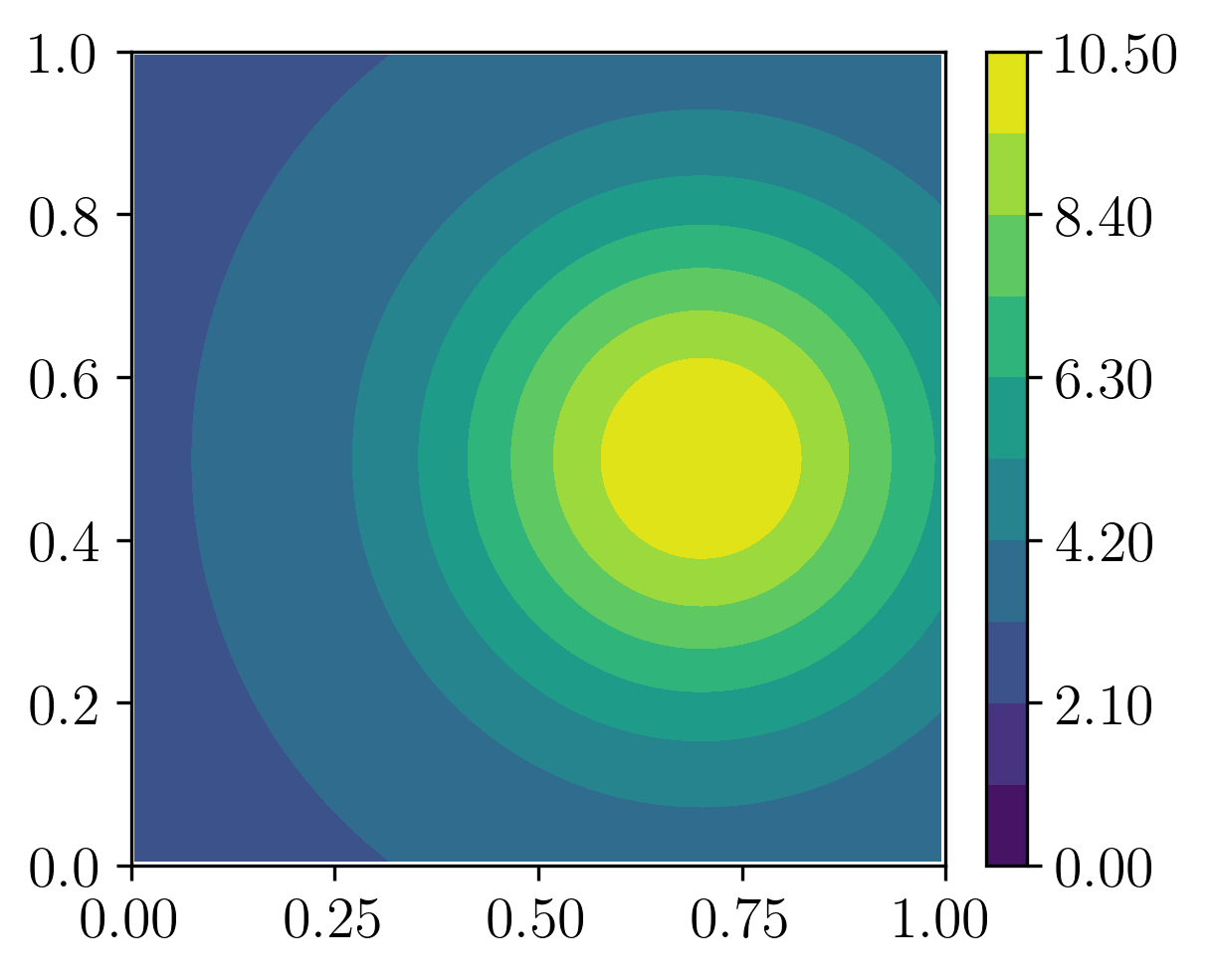}
	\subcaption{Benefit function $B(x,y)$}\label{fig:ex5_B}
\end{subfigure}
\hspace{-0.4in}
\begin{subfigure}[t]{0.52\textwidth}
	\includegraphics[height=0.65\textwidth]{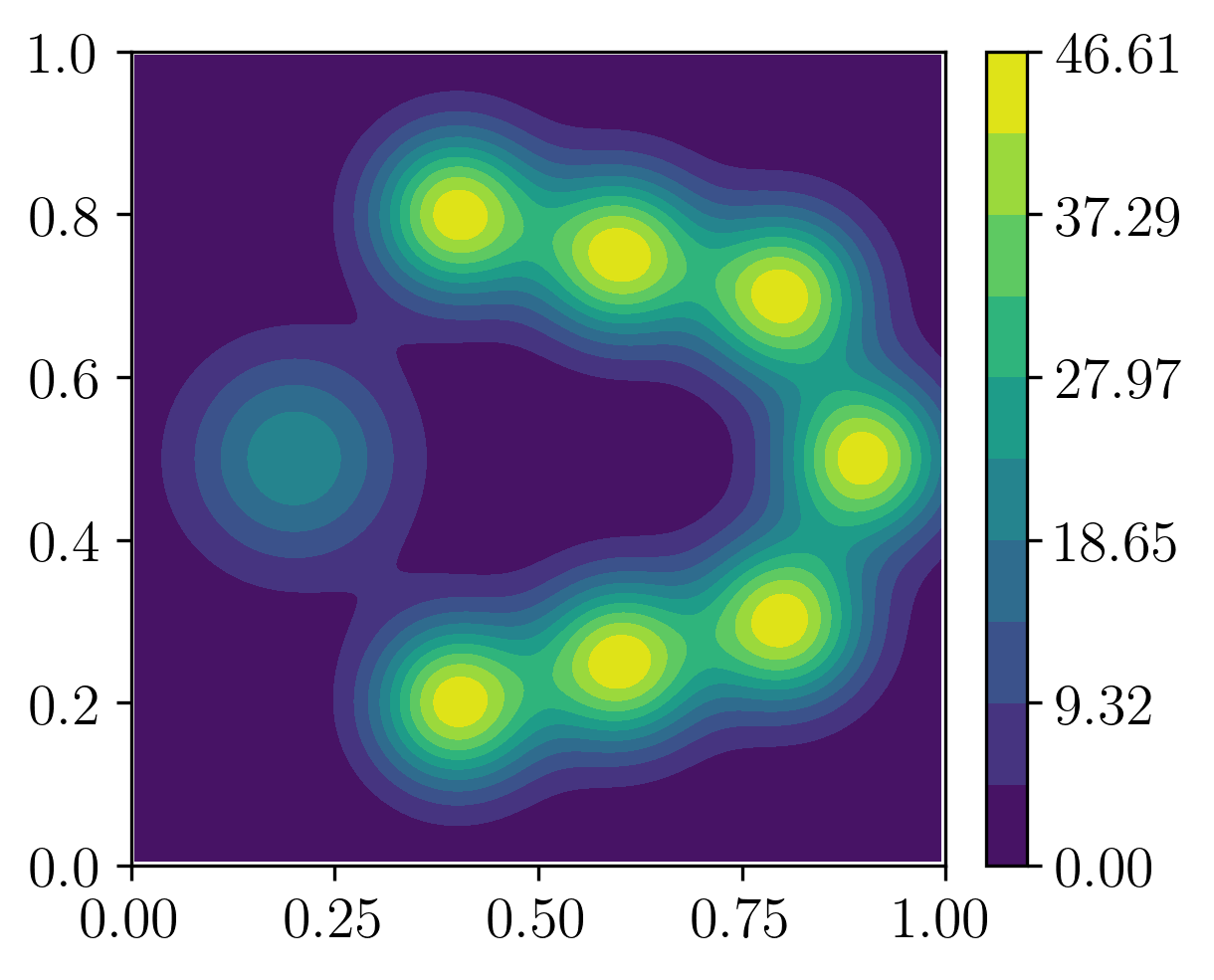}
	\subcaption{Banded patrol density function $\psi(x,y)$}\label{fig:ex5_psi}
\end{subfigure} 

\begin{subfigure}[t]{0.52\textwidth}
	\includegraphics[height=0.65\textwidth]{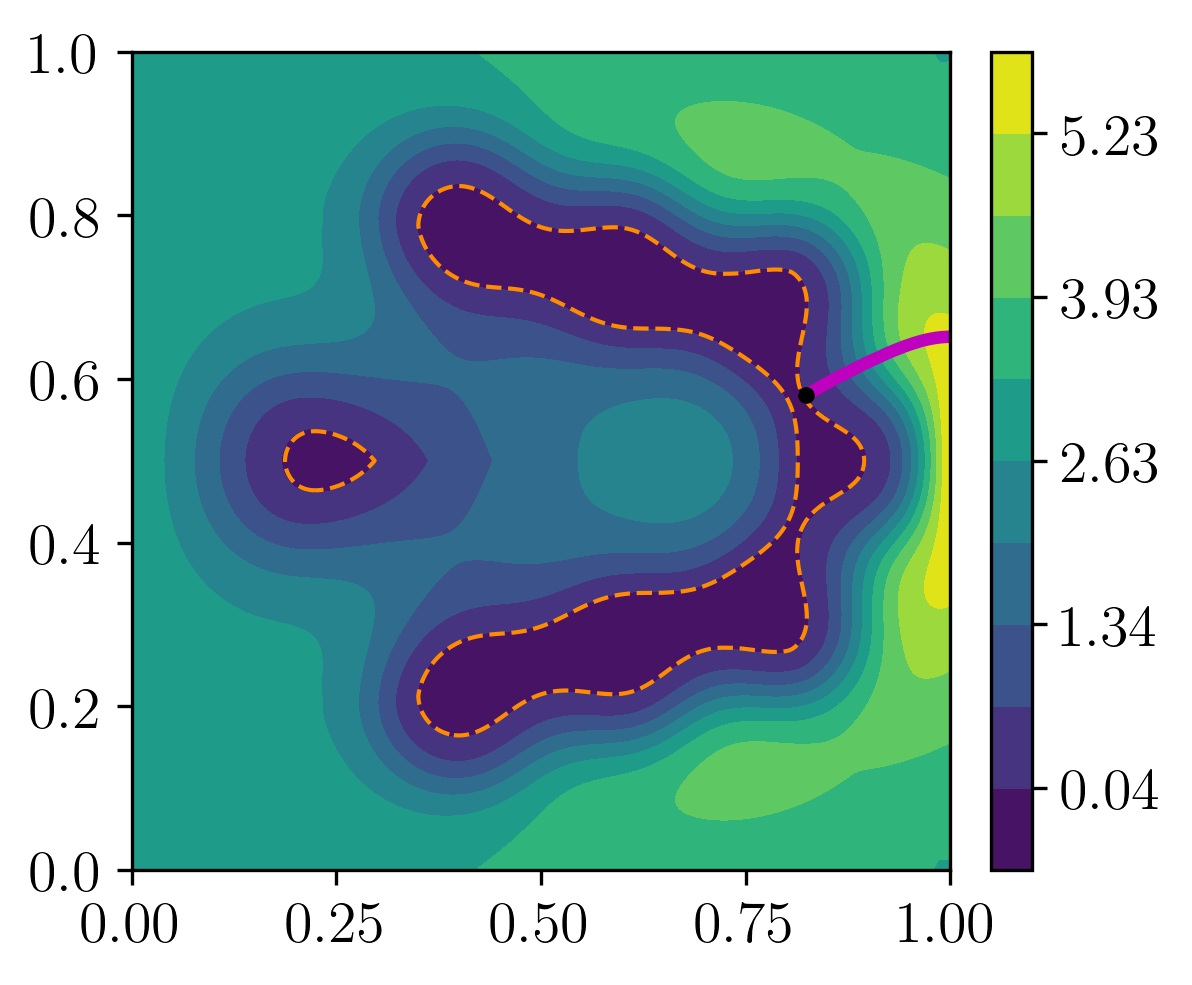}
	\subcaption{\centering Expected profit $\PA(x,y)$}\label{fig:ex5_PA}
\end{subfigure}
\hspace{-0.4in}
\begin{subfigure}[t]{0.52\textwidth}
	\includegraphics[height=0.65\textwidth]{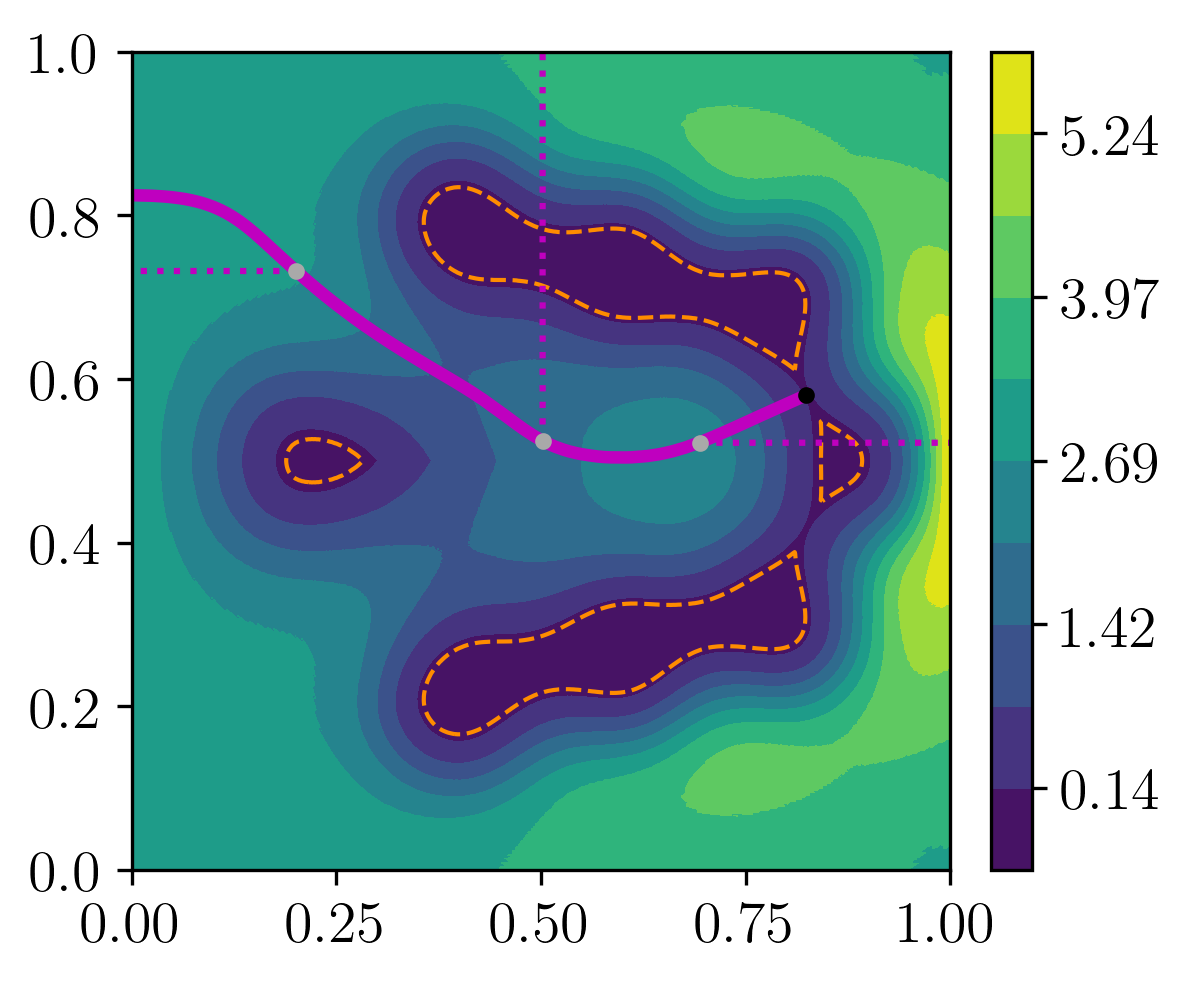}
	\subcaption{Expected profit $\PG(x,y)$}\label{fig:ex5_PG}
\end{subfigure}
\caption{Differences between path-planning in Model A ($\PbarA = 5.87, \,\ApA = 11.06\%, \, \VpA = 14.66\%.$) and Model G ($\PbarG = 5.88, \, \ApG = 8.63\%, \, \VpG = 11.02\%$).}\label{fig:ex5}
\end{center}
\end{figure}

\subsection{Example 6: Yosemite National Park}
Here we use the domain and terrain data from Yosemite National Park.  To facilitate the comparison, we use the same $B,$ $\psi,$ and $f$ defined in \cite{arnold2018modeling}. 

The benefit function shown in Figure \ref{fig:ex6_B} is $B(d) = 8d(2d_m-d)/(d_m),$ 
where $d_m = \max_{\x \in \domain} d(\x).$  
The banded patrol density shown in Figure \ref{fig:ex6_psi} is defined by
\begin{equation*}
\psi(d) = \begin{cases}
\frac{\mu\left(0.7d_m - d\right)}{d_m}, \qquad &0.3d_m < d < 0.7d_m \\
0, \qquad &\text{otherwise}
\end{cases}
\end{equation*}
where $\mu$ is selected based on $E = 3 \times 10^4$.

The extractors' isotropic speed of motion is shown in Figure \ref{fig:ex6_f} and depends on the slope of the elevation map $z(\x)$.  Namely, $s = |\nabla z(\x)|$ is the grade in the direction of $\nabla z(\x)$ and 
\begin{equation}
\label{eq:speed_grade_UCLA}
f(s) = 1.11\exp\left(-\frac{(100s+2)^2}{2345}\right).
\end{equation}

This example was computed on a uniform rectangular grid with $N_x=5406$, $N_y = 4325$, and $N_{\lambda}=21$.
The expected profit $\PA$ can be seen in Figure \ref{fig:ex6_P}.  
The red dashed line corresponds to the boundary of the ``high value region'' 
\begin{equation}
\label{eq:high_value_modified}
\domain_e \; = \; 
\left\{ 
\xbar  \in \domain \, \mid \, \PA(\xbar) + R(\xbar) \, \geq  \, (1-\varepsilon) \max\limits_{\x \in \domain} \left( \PA(\x) + R(\x) \right) 
\right\},
\end{equation} 
with $\varepsilon = 0.85.$  This definition is chosen for the sake of direct comparison with Figure 9 in \cite{arnold2018modeling} since that paper does not consider the cost of pre-extraction trajectories and their definition of pristine region is also different.
\begin{figure}
\begin{center}
\begin{subfigure}[t]{0.52\textwidth}
	\includegraphics[height=0.65\textwidth]{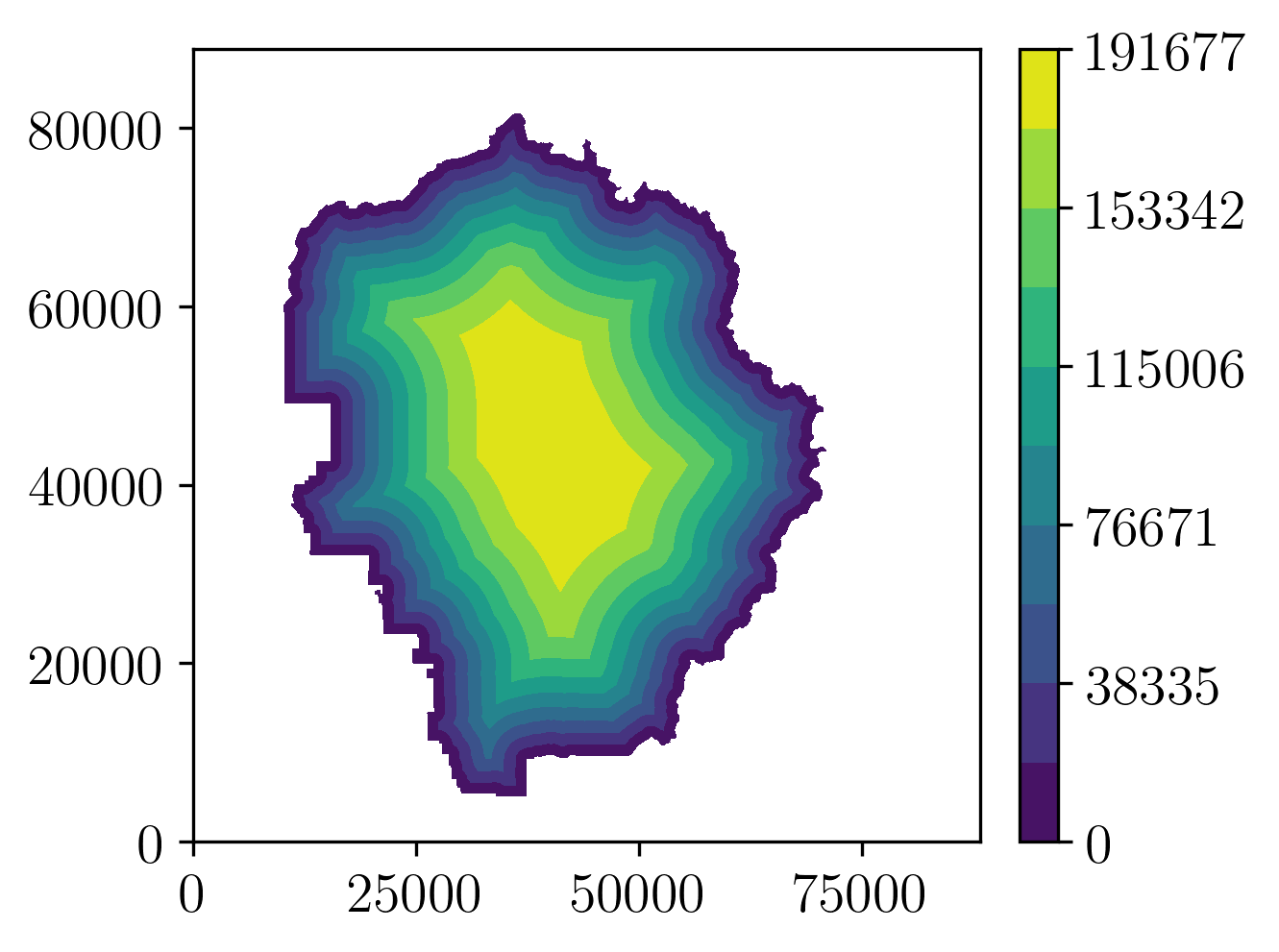}
	\subcaption{\centering Benefit function $B(x,y)$}\label{fig:ex6_B}
\end{subfigure}
\hspace{-0.4in}
\begin{subfigure}[t]{0.52\textwidth}
	\includegraphics[height=0.65\textwidth]{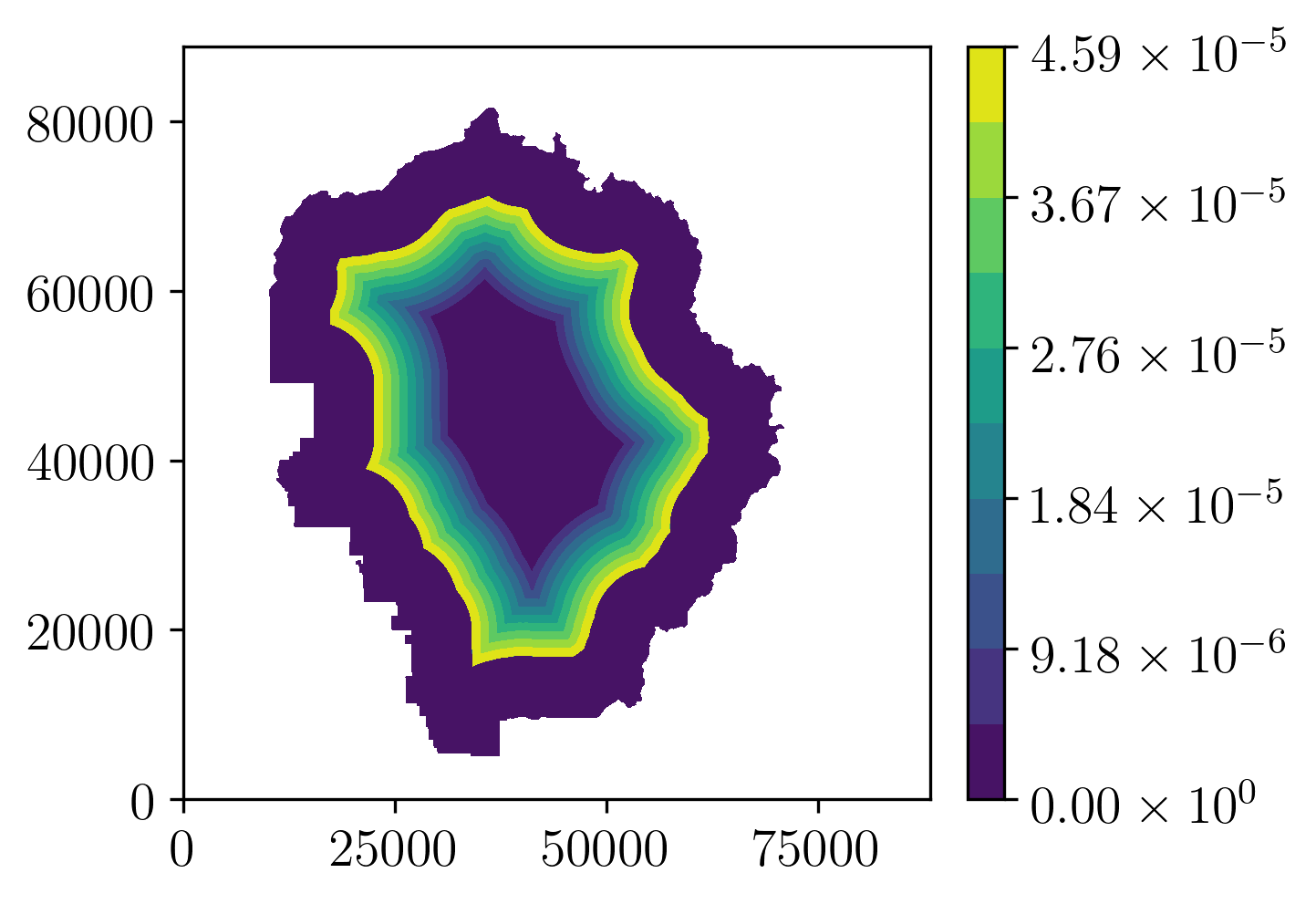}
	\subcaption{\centering Banded patrol density function $\psi(x,y)$}\label{fig:ex6_psi}
\end{subfigure} 

\begin{subfigure}[t]{0.52\textwidth}
	\includegraphics[height=0.65\textwidth]{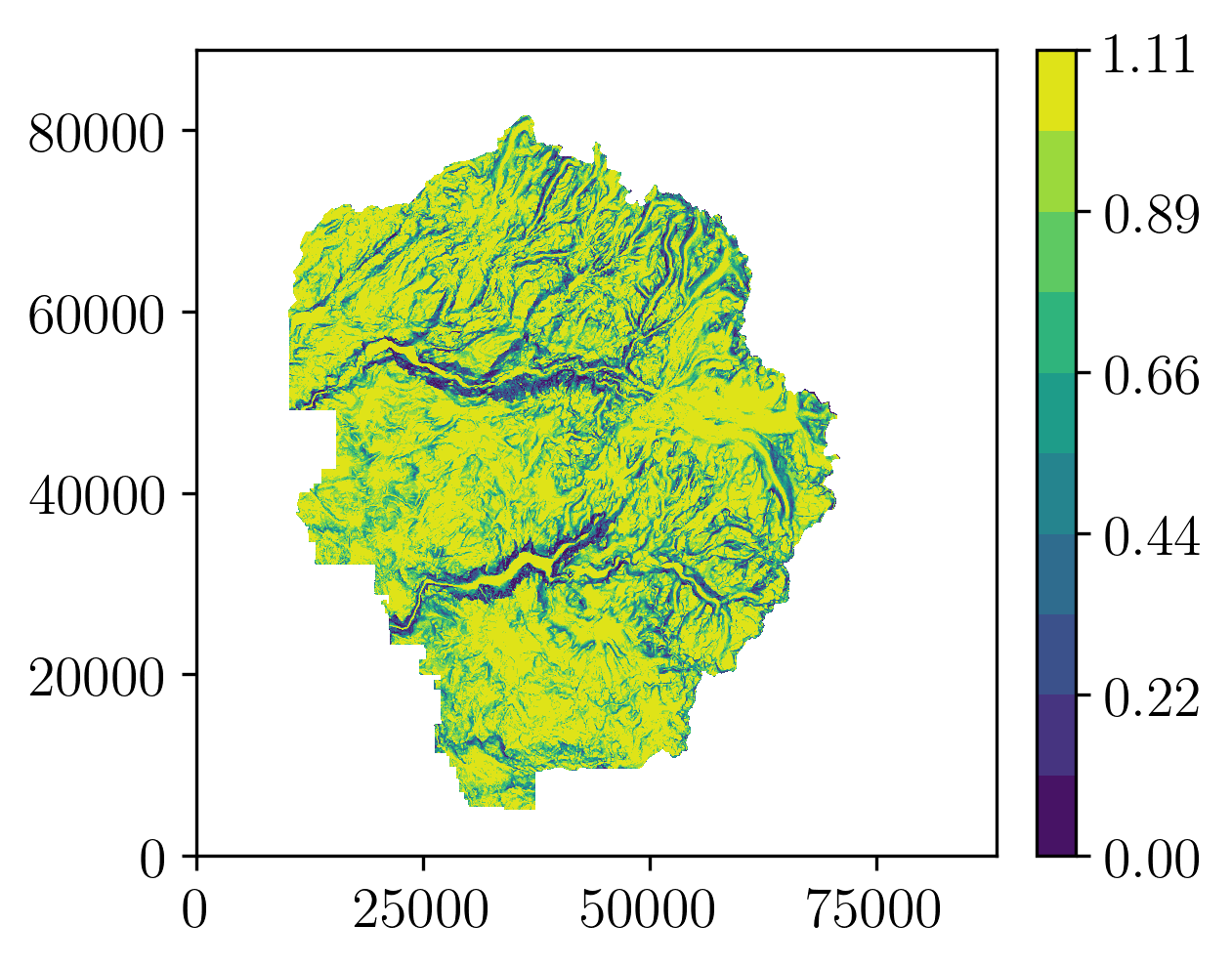}
	\subcaption{\centering Speed $f(x,y)$}\label{fig:ex6_f}
\end{subfigure}
\hspace{-0.4in}
\begin{subfigure}[t]{0.52\textwidth}
	\includegraphics[height=0.65\textwidth]{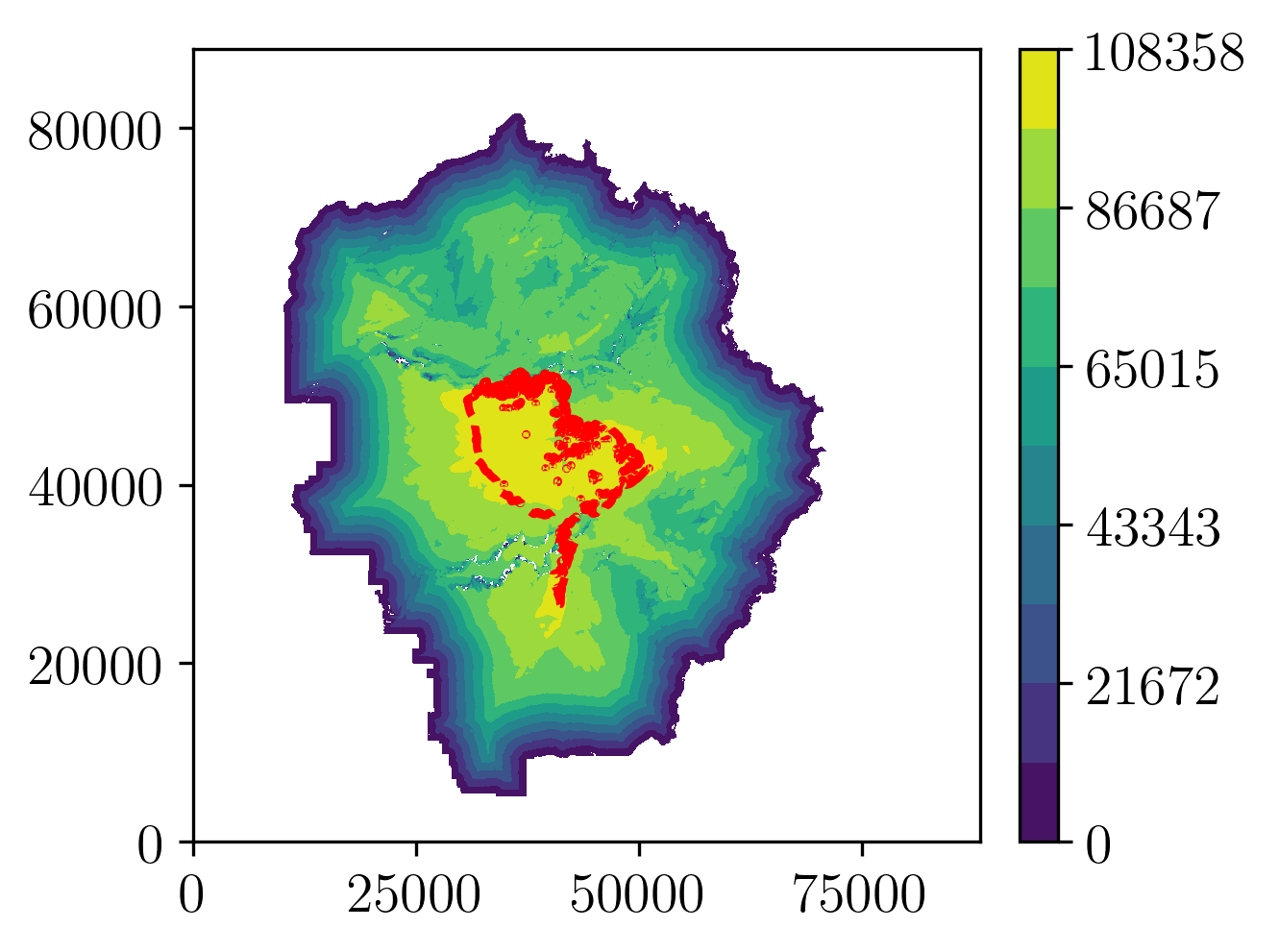}
	\subcaption{\centering Expected profit $\PA(x,y)$ for the extractor}\label{fig:ex6_P}
\end{subfigure}
\caption{Yosemite National Park with $E=3 \times 10^4,$ corresponding to Figure 9b
in \cite{arnold2018modeling}.
$\PbarA = 108358, \, \ApA = 1.90\%,$ and $\VpA = 1.86\%.$
Dashed red line indicates $\boundary_e.$}
\label{fig:ex6}
\end{center}
\end{figure}

Under our interpretation of the extraction criteria, the extractors will target more than $98\%$ of the park, and the patrol budget $E$ is clearly insufficient.  We repeat this experiment with a higher $E =  2 \times 10^5;$ see Figure \ref{fig:ex6_higherE}.  

This example presents a non-trivial Pareto Front since the detection rate $\psi$ is $d$-banded (rather than $\tau$-banded) and the time-optimal paths are generally not detection-minimizing.  As a result, the predicted profit in Model A is different from Model G,
but the differences are not very large here; e.g., $\ApA - \ApG \approx 2.16\%$ even with the higher patrol budget used in Figure \ref{fig:ex6_higherE}.  

It is also worth noting that the definition \eqref{eq:speed_grade_UCLA} results in very small $f$ 
(and occasionally even zero up to machine precision) on all steep parts  of $\domain$.  As a result, the cost of targeting those points becomes extremely high even if everything around them is extracted.  This explains the 
``noisy'' look of $\boundary_e$ in Figure \ref{fig:ex6_P} and $\pBoundary$ in Figure \ref{fig:ex6_higherE}.  Such un-extractable points are not identified in Figure 9b of \cite{arnold2018modeling} since they are masked by the $\Btilde$-interpolation procedure used in that paper.

\begin{figure}
\begin{subfigure}[t]{0.52\textwidth}
	\includegraphics[height=0.65\textwidth]{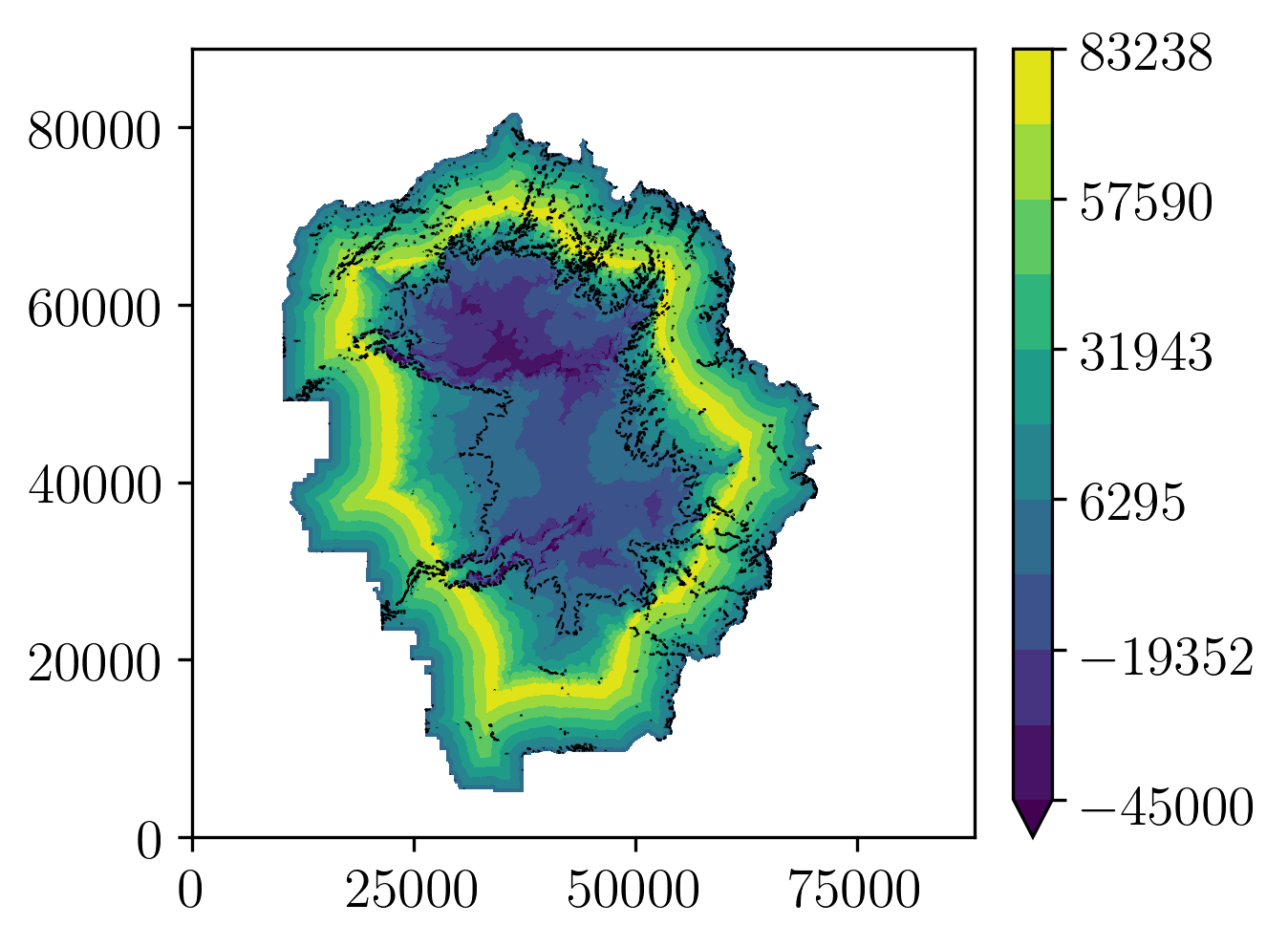}
	\subcaption{\centering Expected profit $\PA(x,y)$\hspace{\textwidth}
	$\ApA = 26.17\%,$ and $\VpA = 43.38\%.$}\label{fig:ex6_PA}
\end{subfigure}
\hspace{-0.4in}
\begin{subfigure}[t]{0.52\textwidth}
	\includegraphics[height=0.65\textwidth]{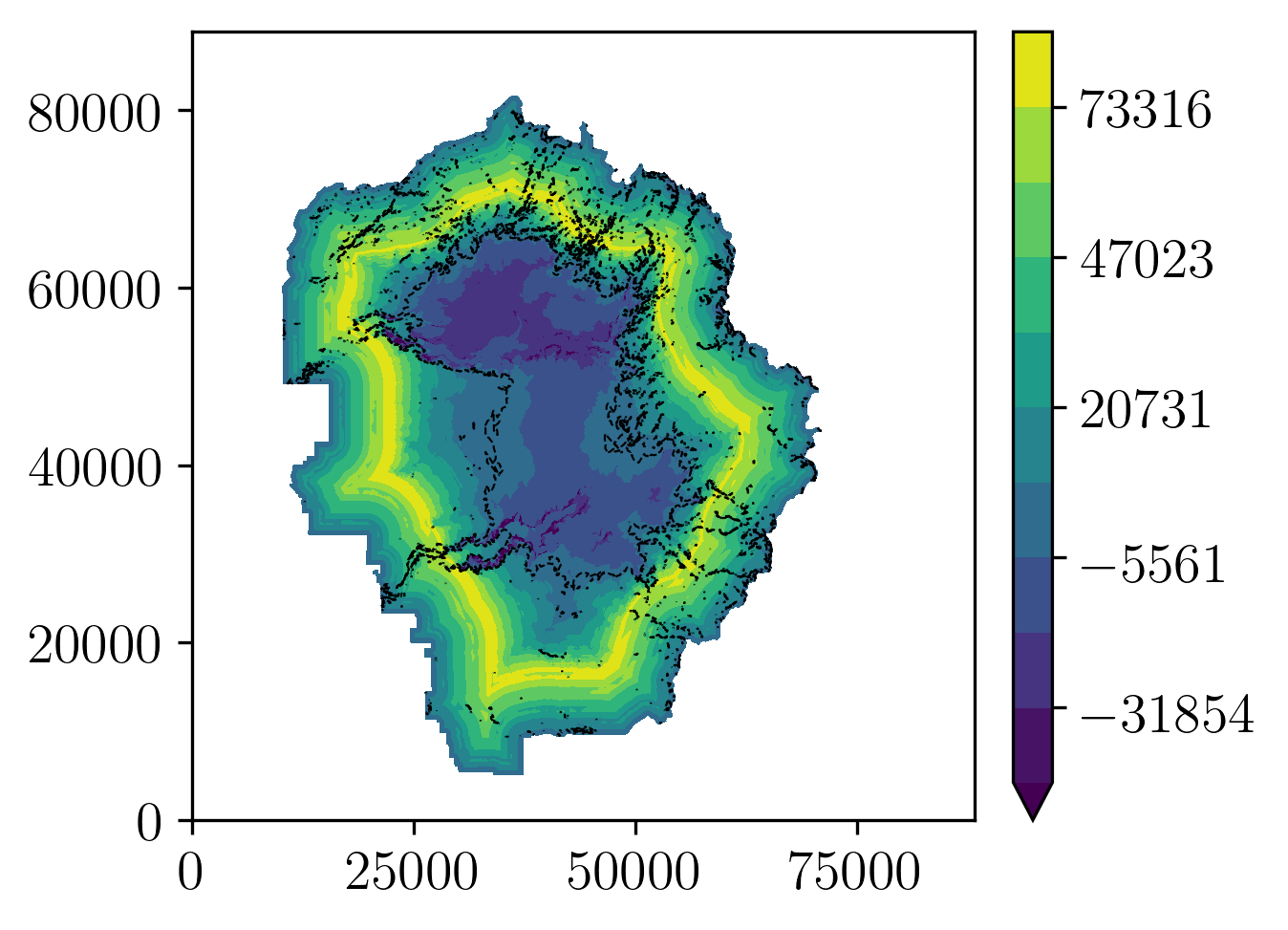}
	\subcaption{\centering Expected profit $\PG(x,y)$\hspace{\textwidth}
	$\, \ApG = 24.01\%,$  and $\VpG= 40.13\%$.}\label{fig:ex6_PG}
\end{subfigure}%
\caption{Yosemite National Park with $E = 2 \times 10^5.$
Black line indicates $\pBoundary.$}
\label{fig:ex6_higherE}
\end{figure}

\subsection{Example 7: Kangaroo Island}
We use the domain and terrain data corresponding to Kangaroo Island with the same $B, \psi,$ and $f$ considered in \cite{arnold2018modeling}.

The functions $B$ and $f$ are defined exactly as in Example 6; see Figures \ref{fig:ex7_B} and \ref{fig:ex7_f}.  The detection rate $\psi$ is a positive constant from $x=30,000$ to $x=90,000$ and zero elsewhere, scaled to match the budget $E = 3 \times 10^4;$ see Figure \ref{fig:ex7_psi}.

This example was computed on a uniform rectangular grid with $N_x=7159$, $N_y = 3111$, and $N_{\lambda}=21$.
The expected profit $\PA$ can be seen in Figure \ref{fig:ex7_P}.
The red dashed line corresponds to the boundary of the ``high value region'' defined in \eqref{eq:high_value_modified} and can be compared with Figure 12b in \cite{arnold2018modeling}. 
Again, almost the entire $\domain$ is targeted by the extractors.  
Since the terrain is mostly flat, this remains true even with a much higher budget $E= 2.5 \times 10^5$; see Figure \ref{fig:ex7_PA}.  

In Figure \ref{fig:ex7_PG} we examine Model G with the same parameters. 
Since a detection is much less costly in Model G, the extractors are even more willing to enter the patrolled area, and the pristine area (shown in orange) is about two times smaller than in Figure \ref{fig:ex7_PA}.

\begin{figure}
\begin{subfigure}[t]{0.52\textwidth}
	\includegraphics[height=0.4\textwidth]{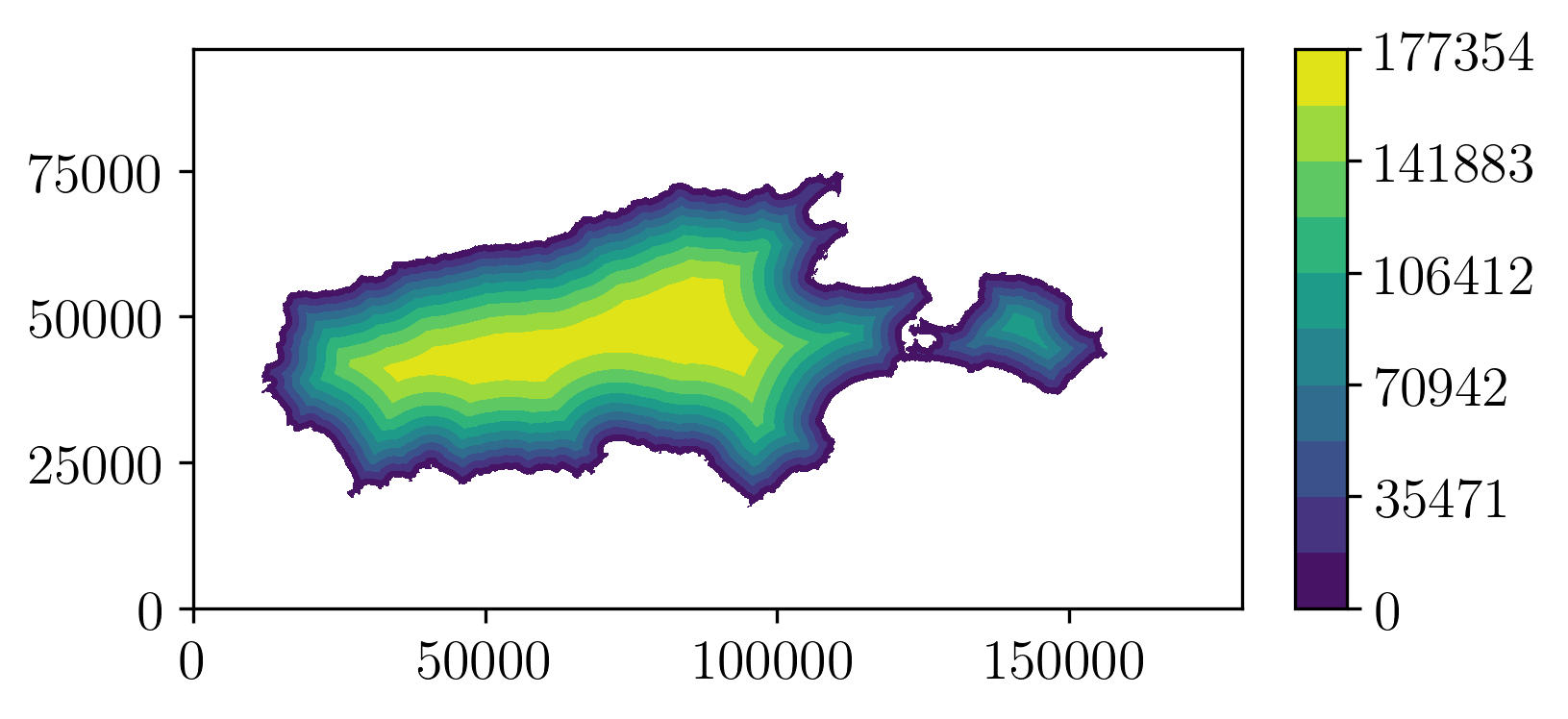}
	\subcaption{\centering Benefit function $B(x,y)$}\label{fig:ex7_B}
\end{subfigure}
\hspace{-0.4in}
\begin{subfigure}[t]{0.52\textwidth}
	\includegraphics[height=0.4\textwidth]{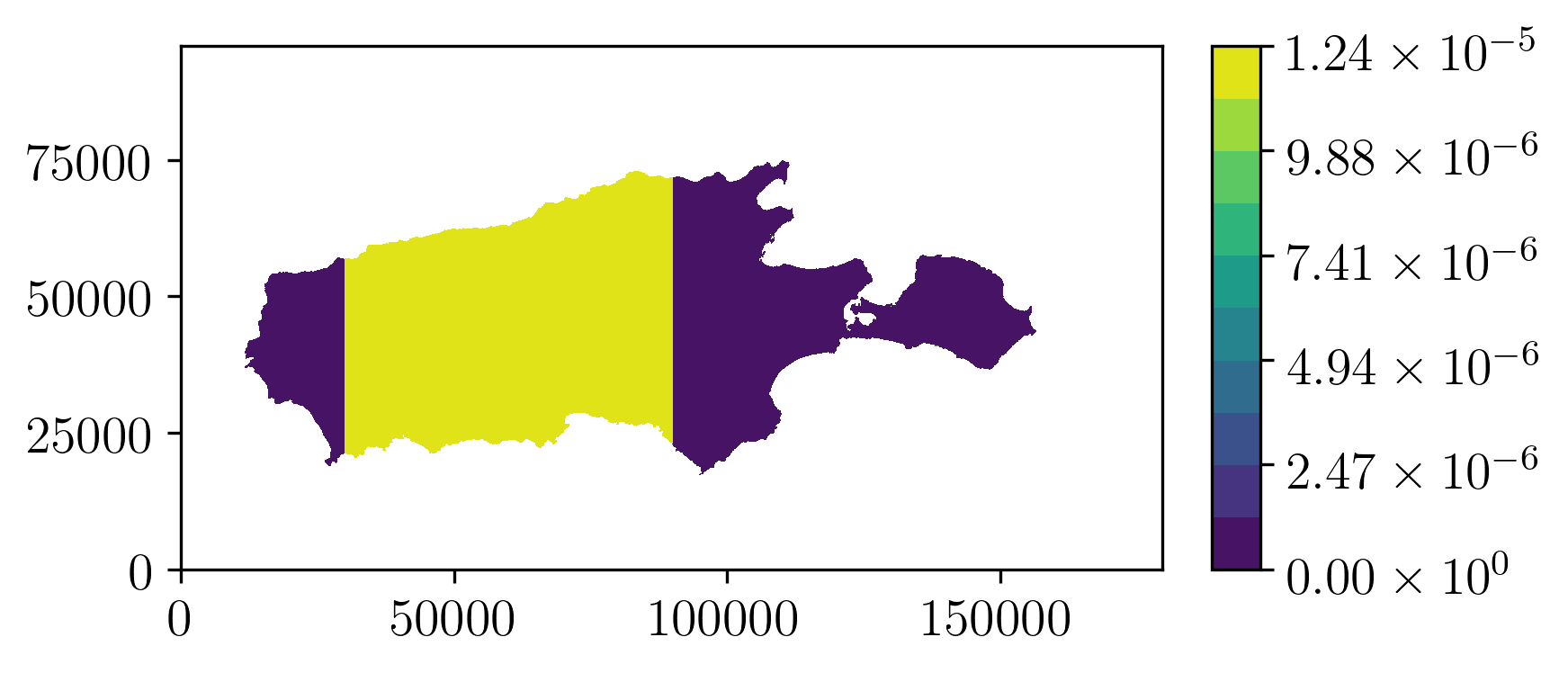}
	\subcaption{\centering Piecewise constant patrol density function $\psi(x,y)$}\label{fig:ex7_psi}
\end{subfigure} 

\begin{subfigure}[t]{0.52\textwidth}
	\includegraphics[height=0.4\textwidth]{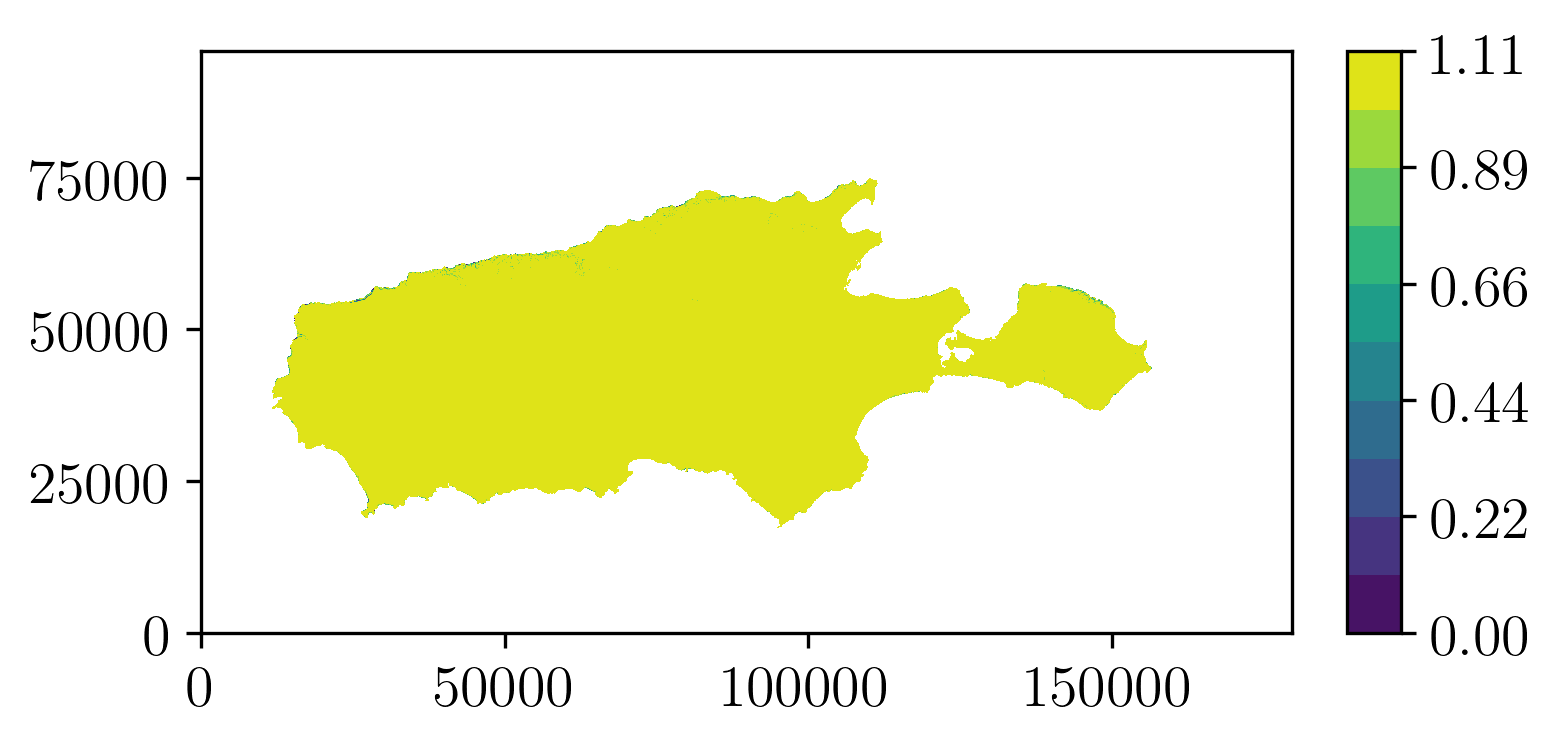}
	\subcaption{\centering Speed $f(x,y)$}\label{fig:ex7_f}
\end{subfigure}
\hspace{-0.4in}
\begin{subfigure}[t]{0.52\textwidth}
	\includegraphics[height=0.4\textwidth]{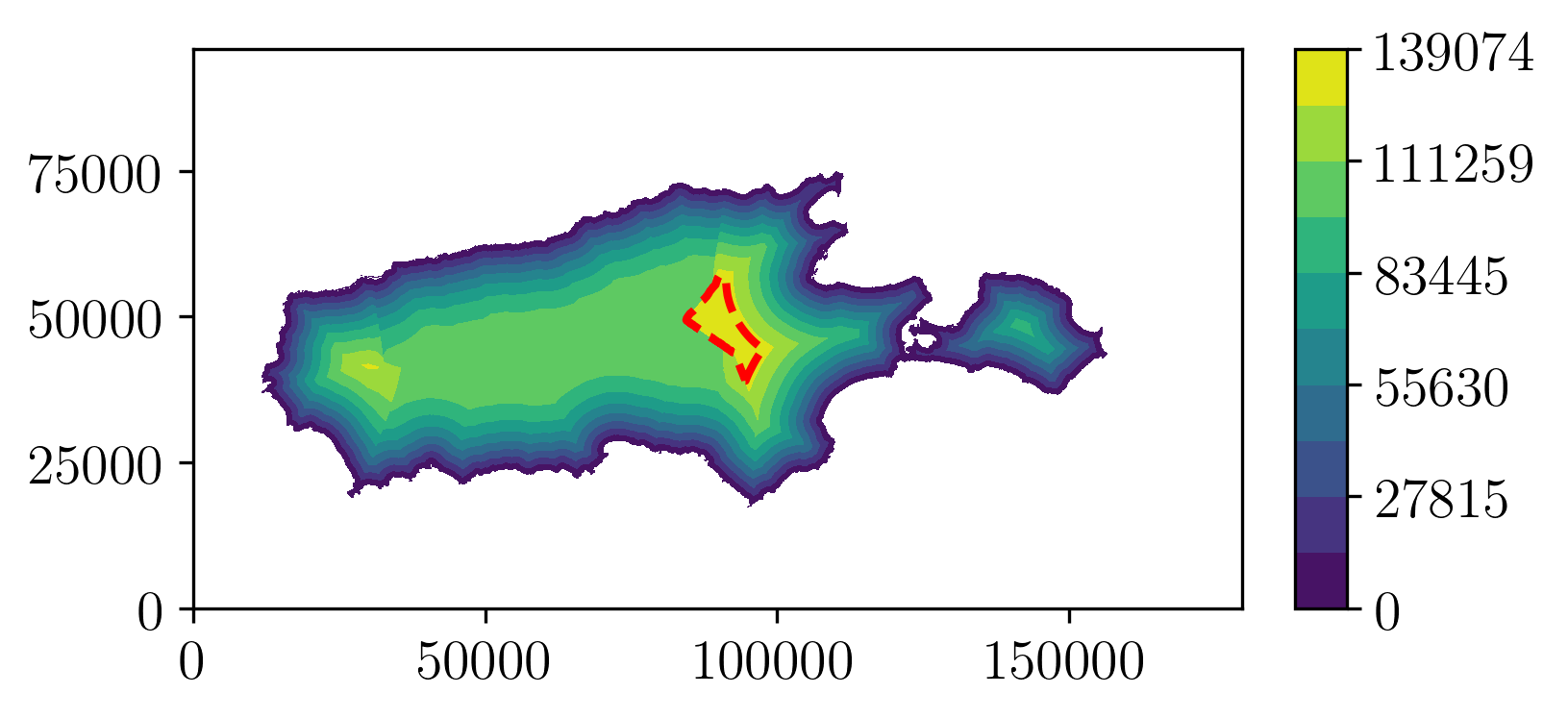}
	\subcaption{\centering Expected profit $\PA(x,y)$ for the extractor}\label{fig:ex7_P}
\end{subfigure}
\caption{Kangaroo Island with $E=3 \times 10^4;$ corresponding to Figure 9b
in \cite{arnold2018modeling}. $\, \PbarA = 139074, \ApA = 0.16\%,$ and $\VpA =0.0042\%.$
Dashed red line indicates $\boundary_e.$}\label{fig:ex7}
\end{figure}

\begin{figure}
\begin{subfigure}[t]{0.52\textwidth}
	\includegraphics[height=0.4\textwidth]{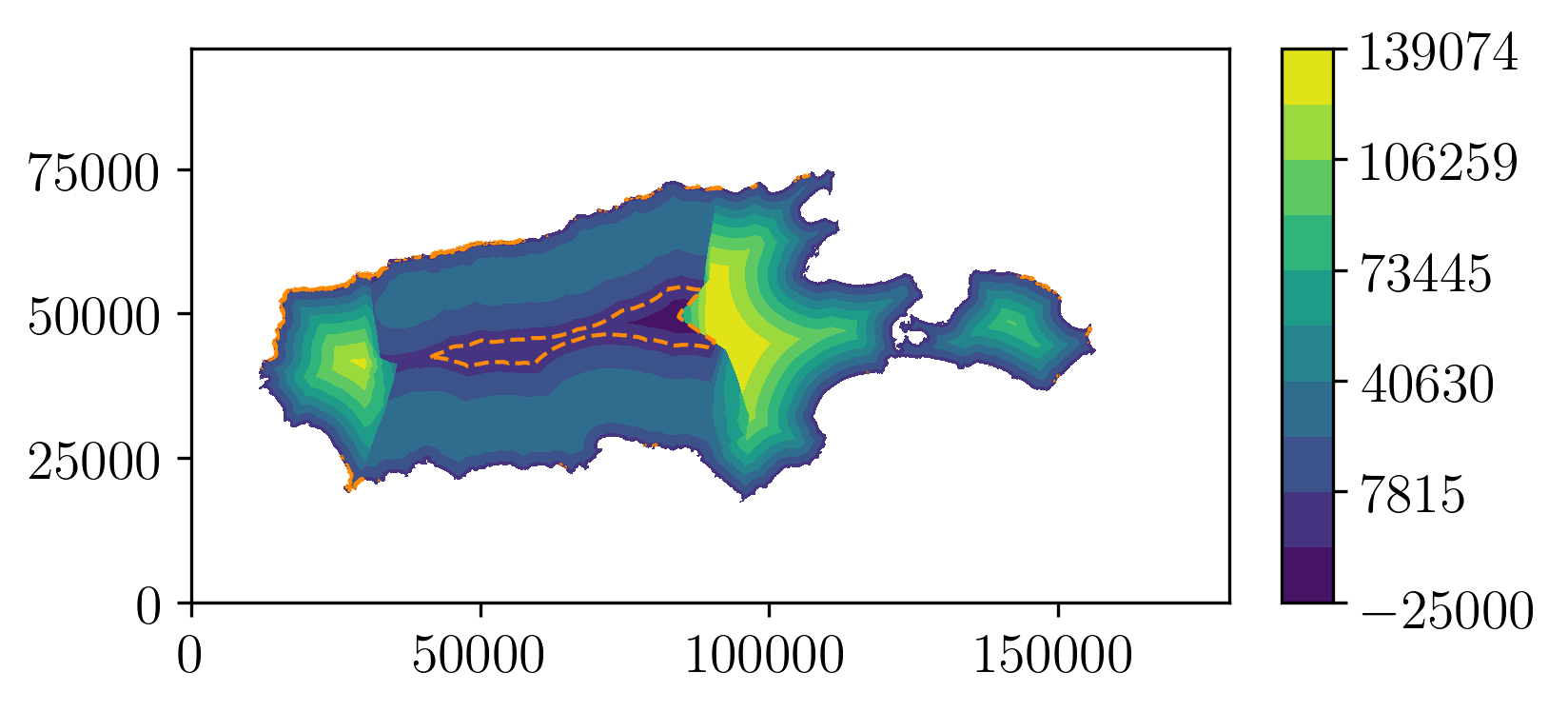}
	\subcaption{\centering Expected profit $\PA(x,y)$\hspace{\textwidth}
	$\, \ApA = 4.42\%,$  and $\VpA = 8.29\%$.}\label{fig:ex7_PA}
\end{subfigure}
\hspace{-0.4in}
\begin{subfigure}[t]{0.52\textwidth}
	\includegraphics[height=0.4\textwidth]{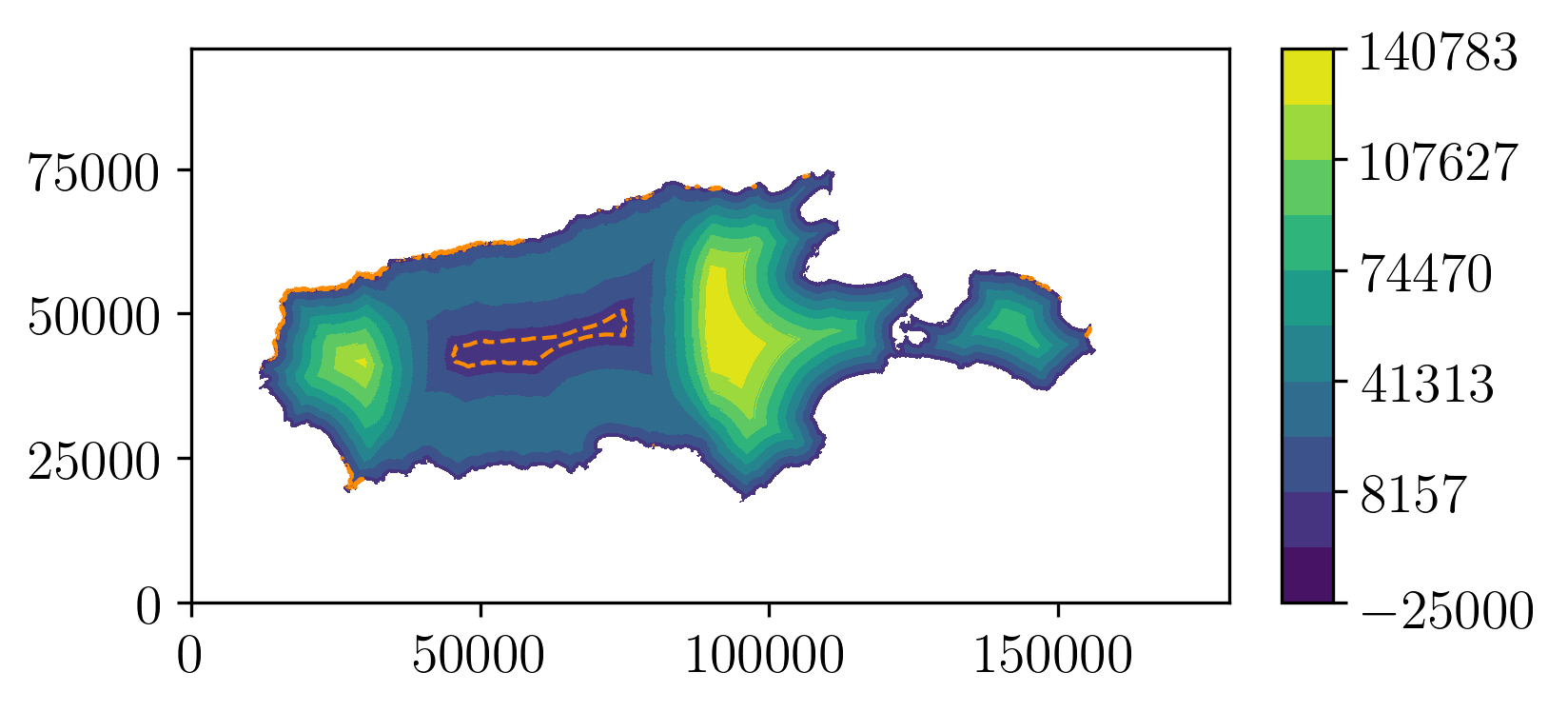}
	\subcaption{\centering Expected profit $\PG(x,y)$\hspace{\textwidth}
	$\, \ApG = 2.19\%,$  and $\VpG= 3.90\%$.}\label{fig:ex7_PG}
\end{subfigure}%
\caption{Kangaroo Island with $E= 2.5 \times 10^5.$  Dashed orange line indicates $\pBoundary.$}%
\end{figure}

\section{Conclusions}
\label{s:Conclusions}

We have presented two control-theoretic models of illegal resource extraction 
counteracting the detection efforts of Protected Area Managers (PAMs).
The perpetrators are assumed to 
choose
potential extraction sites based on their expected profit. 
They take into account the distribution of resources, the geometry and terrain of the protected area $\domain$, and the localized detection rate $\psi: \domain \to \R_{+,0}$ due to PAMs' 
patrol activities.  The cost of extracting at $\x \in \domain$ is based on a roundtrip $\boundary \rightarrow \x \rightarrow \boundary.$  The two portions of the trip usually follow different paths.  The $\boundary \rightarrow \x$ path is selected to minimize the time and hardship of travel, while on the way back ($\x \rightarrow \boundary$) these are balanced against the expected losses resulting from a possible detection by PAMs. 
 
A detection is always followed by confiscation of resources carried by the extractor.  
But the details of how and when a confiscation happens significantly influence the perpetrators' decision making process.  
Our ``Model G'' assumes that the extractors are spotted by ground patrols with an immediate capture/confiscation after their detection.
It is further assumed that after the confiscation the extractors switch to the quickest path to leave the protected area.
We 
find 
their best pre-detection trajectories by recasting this as an optimal control of a suitable randomly-terminated process.  
Our ``Model A'' assumes that the extractors are spotted by aerial patrols but remain oblivious when this happens, continuing along their originally chosen trajectory until they are captured at $\boundary.$
The key challenge of this application is the impossibility of writing a single running cost to be integrated along the post-extraction path unless we resort to a linearizing approximation for the probability of detection (see Remark \ref{rem:linearize}) or agree to increase the dimension of our planning space (see Remark \ref{rem:higher_dim}).   We show that the former often leads to very inaccurate predictions while the latter leads to expensive numerical methods.
Instead, we opt to use the techniques of {\em multiobjective} dynamic programming on the original domain $\domain$, resulting in an accurate and efficient numerical implementation.  Our method is illustrated on a range of examples, with two of them based on real terrain maps from Yosemite National Park in California and Kangaroo Island in South Australia.  In each case, we compute the perpetrators' expected profit from all possible extraction sites as well as the ``pristine'' region $\domain_p \subset \domain$ not affected by extraction.  
We also show that Model G generally results in higher expected profits, smaller $\domain_p,$ and different post-extraction trajectories.  

Many of our modeling assumptions were made primarily for the simplicity of exposition and could be easily relaxed in the future.  For example, it would not be hard to incorporate extractor's preferences for starting and terminal positions on $\boundary.$  One could also easily model the time needed to extract at each site and the resulting increase in the risk of detection.  It would be similarly trivial to include additional fines in case of detection/capture.
Following \cite{arnold2018modeling}, we have assumed that the extractors' speed of motion $f$ is isotropic.  I.e., if $z(\x)$ is the elevation map, the speed $f$ was based on the grade in the direction of $\nabla z(\x)$ rather than on the grade in the chosen direction of motion $\ba$.  In reality, a pedestrian's speed should depend on both, particularly for moderate $|\nabla z|.$  Any such model would replace Eikonal PDEs with anisotropic HJB equations.  Efficient numerical methods for the latter have also been developed both in Fast Marching \cite{SethVlad3, alton2012ordered, mirebeau2014efficient}
and Fast Sweeping \cite{tsai2003fast, KaoOsherTsai_2005} frameworks.

While our current implementation is sequential, we note that it would be easy to build a parallel algorithm with excellent scalability for computing $P(\x)$:  
for each specific $\lambda_k$ or $\Btilde_m$, the value function could be computed by a separate CPU core.  
The resulting speed-up will be very useful in optimizing the choice of detection rate $\psi$  to maximize the area of $\domain_p$ while staying within PAMs' enforcement/surveillance budget.  In the current paper we perform some of this 
by straightforward grid searches, but a more general $\psi$-optimization will require higher computational efficiency.  The problem of optimally allocating surveillance resources across several patrol stations (e.g., Example 4) will become particularly expensive as the number of stations grows.  We believe that an efficient solution will be based on generalizing our recent work on surveillance-evasion games \cite{Static_SEG, TD_SEG}.

We conclude by listing two 
significant 
challenges not tackled in the current paper.  Firstly, a ``single period'' assumption for this Stackelberg game is rather limiting.  As extractors continue degrading the protected area $\domain$, the distribution of resources changes and PAMs should accordingly re-allocate the surveillance resources, adjusting $\psi$ to increase the pristine region $\domain_p.$  Secondly, in practice the detection rate $\psi$ is not chosen directly but results from implementing specific surveillance strategies (e.g., choices of particular patrol trajectories and patrol frequencies).   It would be far more realistic and more difficult to optimize those strategies directly.

\vspace*{5mm}
\noindent
{\bf Acknowledgements. }
We are very grateful to all authors of \cite{arnold2018modeling} for answering our questions about their model and for sharing the terrain data used in their numerical experiments.  The current paper was written during AV's sabbatical visit to ORFE/Princeton, and he would like to thank ORFE for its hospitality.  

\bibliographystyle{siam}
\bibliography{Poaching}

\end{document}